\documentclass[12pt, reqno]{amsart}

\numberwithin{equation}{section}
\usepackage{enumerate}
 \DeclareMathOperator{\Ind}{Ind}

 \usepackage{geometry }
\geometry{left=2.9cm,right=3cm,top=3.3cm,bottom=3.3cm}
 \hfuzz=3pt
 \usepackage{float}
 
\usepackage{varioref}
\theoremstyle{plain}
\usepackage{color}
\usepackage{amssymb,amsmath,amscd}
\usepackage{amsthm}
 \numberwithin{dummy}{section}

\newtheorem{theorem}{Theorem}[section]
\newtheorem{theorem*}{Theorem }
\newtheorem{proposition}[theorem]{Proposition}


\newtheorem{lemma}[theorem]{Lemma}
\newtheorem{remark}[theorem]{Remark}

\DeclareMathOperator{\Ad}{Ad}

\DeclareMathOperator{\id}{id}
\DeclareMathOperator{\Id}{Id}

\DeclareMathOperator{\Lie}{Lie}

\DeclareMathOperator{\res}{\mathbf{res}}

 \DeclareMathOperator{\Rel}{Re}
 \DeclareMathOperator{\dd}{\boldsymbol d}
\DeclareMathOperator{\cdd}{\boldsymbol \delta}
\DeclareMathOperator{\di}{\boldsymbol \iota}
\DeclareMathOperator{\de}{\boldsymbol \varepsilon}
  
 \usepackage{mathtools}
 \usepackage{lipsum}
\usepackage[bbgreekl]{mathbbol}

\DeclareSymbolFontAlphabet{\mathbb}{AMSb}
\DeclareSymbolFontAlphabet{\mathbbl}{bbold}
 
  
 \usepackage[
 pdftex=true, colorlinks=true,  citecolor=cyan,   
 citebordercolor={0 .255 .255} , 
  linkbordercolor={0 .255 .255},  
  linkcolor=cyan
 ]{hyperref}

   \usepackage{cite}


 \def\equationautorefname~#1\null{(#1)\null}
 
\usepackage{multirow}
 \usepackage{stmaryrd}
 \usepackage{graphicx}
 \usepackage{caption}
 \usepackage{booktabs}
 \usepackage{threeparttable}


\makeatletter
\ifcsname phantomsection\endcsname
    \newcommand*{\qrr@gobblenexttocentry}[5]{}
\else
    \newcommand*{\qrr@gobblenexttocentry}[4]{}
\fi
\newcommand*{\addsubsection}{%
    \addtocontents{toc}{\protect\qrr@gobblenexttocentry}%
    \subsection}
\makeatother

\usepackage{lipsum}
\usepackage{mathabx}
\usepackage[normalem]{ulem}

 
\begin{document}
 

\title[covariant bi-differential operators for
differential forms]{Conformally covariant bi-differential operators for
differential forms}

\author{Salem Ben Sa\"id, Jean-Louis Clerc and Khalid Koufany }
\address{Institut Elie Cartan de Lorraine, UMR CNRS 7502, Universit\'e de Lorraine, Campus Aiguillettes - BP 70239, F-54506 Vandoeuvre-l\`es-Nancy Cedex, France}
\email{(Salem.Bensaid / Jean-Louis.Clerc / Khalid.Koufany) @univ-lorraine.fr}
\address{Salem Ben Said ({\it Current address}):  Department of Mathematical Sciences, College of Science, United Arab Emirates University, Al Ain Abu Dhabi, UAE. }
\keywords{Differential forms, conformal group, covariant bi-differential operators,   principal series, Riesz distribution}
\subjclass[2000]{Primary 43A85. Secondary 58J70, 22E46, 58A10}


 \maketitle
 
  \begin{abstract} The classical  Rankin-Cohen brackets  are bi-differential operators 
 from $C^\infty(\mathbb R)\times  C^\infty(\mathbb R)$ into $ C^\infty(\mathbb R)$. They are  covariant for the (diagonal) action of ${\rm SL}(2,\mathbb R)$ through principal series representations. We construct generalizations of these operators, replacing $\mathbb R$ by $\mathbb R^n,$ the group ${\rm SL}(2,\mathbb R)$ by the group ${\rm SO}_0(1,n+1)$ viewed as the conformal group of $\mathbb R^n,$  and functions by  differential forms. 
   \end{abstract}
   \tableofcontents
   \section{Introduction}
   The \emph{Rankin-Cohen brackets} are the most famous examples of conformally covariant bi-differential operators. For a presentation of these operators from our point of view based on harmonic analysis of the group ${\rm SL}(2,\mathbb R)$, we refer the reader to the introduction of \cite{bck}. In \cite{or} Ovsienko and Redou introduced their analogs for conformal analysis on $\mathbb R^n$.

A new construction of these covariant bi-differential operators was proposed by Beckmann and the second present author in \cite{bc}, where (although implicitly) the \emph{source operator} method was  introduced. In our situation, the source operator is a differential operator on $\mathbb R^n\times \mathbb R^n$, covariant for the diagonal action of the conformal group ${\rm SO_0}(1,n+1)$. The covariant bi-differential operators are obtained by  composing the source operator with the restriction map from $\mathbb R^n\times \mathbb R^n$ to the diagonal. This technique has shown to be very efficient to produce new examples of covariant differential operators in many different contexts.  In \cite{bck}, we constructed covariant bi-differential operators in the context of \emph{simple real Jordan algebras}.  
The article  \cite{c} contains an alternative construction of the covariant differential operators  introduced by   Juhl \cite{j} in the context of the  restriction of $\mathbb R^n$ to $\mathbb R^{n-1}$. In the same geometric context,   Fischmann,  \O rsted and  Somberg \cite{fos}  recently obtained a new construction of the covariant differential operators for differential forms, previously obtained by  Kobayashi,  Kubo and   Pevzner in \cite{kkp} and  by   Fischmann,   Juhl and   Somberg in \cite{fjs}. Finally, it is worthwhile mentioning that a more general notion of \emph{symmetry breaking operators} (not necessarily differential) has been studied by  Kobayashi and his collaborators (see, e.g., \cite{ks1}, \cite{ks2}, \cite{kp}).

In the present paper, we construct bi-differential operators acting on spaces of  differential forms which are covariant for the conformal group of $\mathbb R^n$; more precisely for the group  $G={\rm SO_0}(1,n+1)$. To build these bi-differential operators, we use again the  \emph{source operator} method. The source operators are constructed as a composition of the multiplication operator  by the function $\Vert x-y\Vert^2$ (using its transformation rule under the action of the conformal group) and  classical Knapp-Stein intertwining operators. These intertwining operators on differential forms  were studied recently in   \cite{fo, fos} and some of their results are used (and sometimes reproved) in the present article.

The construction relies ultimately on two \emph{main  identities} stated in Theorem \ref{main1} and Theorem \ref{main2} (see also Theorem \ref{normal}),    the second one being the Euclidean Fourier transform of the first one.  As they involve purely Euclidean harmonic analysis they are presented in Section 3, independently of the conformal context.   In Section 4 we give some background on the conformal group of $\mathbb{R}^n$ needed to  describe the {\em noncompact model} for the principal series representations of $\rm{SO}_0(1,n+1)$ in Section 5.   The conformal properties of the source operator are  given in Section  6, where harmonic analysis of the group ${\rm SO_0}(1,n+1)$ plays a crucial role.

The corresponding covariant bi-differential operators are constructed in  Section 7. The lack of a manageable decomposition of the tensor product $\sigma_k\otimes \sigma_\ell$, $0\leq k,\ell\leq n,$ where $\sigma_k$ denotes the representation of ${\rm SO}(n)$ on the spacecomplex-valued  $\Lambda^k$ of complex-valued  alternating $k$-forms on $\mathbb{R}^n$,  prevents us from giving explicit formulas    for the corresponding bi-differential operators, but this can be done at least for the Cartan factor $\Lambda^{k+\ell}$ with  $0\leq k+\ell \leq n$, appearing in $\Lambda^k\otimes \Lambda^\ell$,  (see \autoref{dernier}).

 \section{Background on differential forms}
 
 
 Let $\langle \cdot,\cdot\rangle$ be the standard Euclidean scalar product in $\mathbb R^n$    and $\|\cdot\|$ the corresponding norm. Let 
 $(e_1,e_2,\ldots, e_n)$ be the standard orthonormal  basis of  $\mathbb{R}^n$   and let $(e_1^*,e_2^*,\ldots, e_n^*)$ be its dual basis. 
 
  
  
 For $0\leq k\leq n,$ we denote by $ \Lambda^k=\Lambda^k({\mathbb R}^{{n}^*})\otimes\mathbb{C}$ the vector space of complex-valued  alternating multilinear $k$-forms on $\mathbb R^n.$  
 A basis of the space $\Lambda^k $ is given by 
 $$\{ e_I^*:=e_{i_1}^*\wedge \cdots \wedge e_{i_k}^*\;:\;  1\leq i_1<\cdots <i_k\leq n\}.$$
 If $\omega \in \Lambda^k$ and $\eta\in \Lambda^\ell,$ then $\omega \wedge \eta\in \Lambda^{k+\ell}.$ 
 Furthermore, 
 \begin{equation}\label{kl}
   \omega\wedge \eta=(-1)^{k\ell} \,\eta\wedge \omega.
   \end{equation}
The exterior algebra  $\Lambda:=\bigoplus_{k=0}^\infty  \Lambda^k=\bigoplus_{k=0}^n  \Lambda^k$  is an associative algebra, graded  with respect to the degree $k$.

 The interior product of a $k$-form $\omega$ with a vector $x\in \mathbb R^n$ is the $(k-1)$-form defined by 
$$
  {\boldsymbol \iota}_x \omega(x_1, \ldots, x_{k-1})=\omega(x, x_1, \ldots, x_{k-1}).
$$
Moreover, 
\begin{equation}\label{cases} {\boldsymbol \iota}_{e_j} (e^*_{i_1}\wedge \cdots \wedge e^*_{i_k})= \begin{cases}
  0 &\text{  if} \; j \not = \text{any}\; i_r \\
  (-1)^{r-1} e^*_{i_1}\wedge \cdots \wedge \widehat{e^*_{i_r}}\wedge \cdots \wedge e^*_{i_k}&\text{  if} \; j=i_r,
  \end{cases}\end{equation}
  where the ``cap" over $e^*_{i_r}$ means that it is deleted from the exterior product. 
 One may check that \begin{equation}\label{commi}
{\boldsymbol \iota}_x {\boldsymbol \iota}_y+{\boldsymbol \iota}_y {\boldsymbol \iota}_x =0.\end{equation}

 Given $x\in \mathbb R^n,$ the exterior product of a $k$-form $\omega$ with the linear form $x^*$ is the $(k+1)$-form defined by 
 \begin{equation*}
{\boldsymbol \varepsilon}_x\omega=x^*\wedge \omega.
  \end{equation*} 
  From the associativity of the wedge product and \autoref{kl},  it follows
\begin{equation}\label{comm}{\boldsymbol \varepsilon}_x {\boldsymbol \varepsilon} _y + {\boldsymbol \varepsilon}_y {\boldsymbol \varepsilon}_x =0.
\end{equation}
There is the following useful anticommutation relation
\begin{equation}\label{ID}
 {\boldsymbol \varepsilon}_x {\boldsymbol \iota}_y +{\boldsymbol \iota}_y {\boldsymbol \varepsilon}_x = \langle x,y\rangle \Id_{\Lambda},\qquad x,y\in \mathbb R^n.
 \end{equation}  This is a fairly straightforward consequence of \autoref{cases}.  Further, by \autoref{ID}, \autoref{comm} and \autoref{commi} we   have
  $${\boldsymbol \varepsilon}_x {\boldsymbol \iota}_y {\boldsymbol \varepsilon}_x  =\langle x,y\rangle    {\boldsymbol \varepsilon}_x ,\qquad
   {\boldsymbol \iota}_y {\boldsymbol \varepsilon}_x {\boldsymbol \iota}_y  =\langle x,y\rangle   {\boldsymbol \iota}_y .$$
 
 We denote by ${\boldsymbol \iota}_{ j } $  the interior    product  with the basis vector $e_j$   and by ${\boldsymbol \varepsilon}_{ j } $   the exterior products with $e_j^*.$  
  The following lemma is needed for  later use.

\begin{lemma} On $\Lambda^k  $ we have
\begin{equation*}
\sum_{j=1}^n {\boldsymbol \varepsilon}_j{\boldsymbol \iota}_j  =k \Id_\Lambda,\qquad
\sum_{j=1}^n {\boldsymbol \iota}_j {\boldsymbol \varepsilon}_j=(n-k) \Id_\Lambda.
\end{equation*}
\end{lemma}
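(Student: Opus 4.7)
The plan is to prove the first identity by direct computation on the basis $\{e_I^*\}$ of $\Lambda^k$, and then deduce the second identity as a formal consequence of the anticommutation relation \autoref{ID}.

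For the first identity, fix a strictly increasing multi-index $I=(i_1,\dots,i_k)$ and apply $\boldsymbol{\varepsilon}_j\boldsymbol{\iota}_j$ to $e_I^*$. By \autoref{cases}, $\boldsymbol{\iota}_j e_I^*=0$ whenever $j\notin\{i_1,\dots,i_k\}$, so only the indices $j=i_r$ contribute. For each such $r$ we get
\[
\boldsymbol{\iota}_{i_r} e_I^* = (-1)^{r-1}\, e_{i_1}^*\wedge\cdots\wedge\widehat{e_{i_r}^*}\wedge\cdots\wedge e_{i_k}^*,
\]
and wedging $e_{i_r}^*$ back on the left and moving it into position $r$ costs another factor $(-1)^{r-1}$, giving $\boldsymbol{\varepsilon}_{i_r}\boldsymbol{\iota}_{i_r}e_I^*=e_I^*$. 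Summing over the $k$ indices $j\in I$ yields $\sum_{j=1}^n \boldsymbol{\varepsilon}_j\boldsymbol{\iota}_j\, e_I^*=k\,e_I^*$, which is the first claim.

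For the second identity, specialize \autoref{ID} to $x=y=e_j$. Since $\langle e_j,e_j\rangle=1$, this gives
\[
\boldsymbol{\iota}_j\boldsymbol{\varepsilon}_j+\boldsymbol{\varepsilon}_j\boldsymbol{\iota}_j=\Id_{\Lambda}\qquad (1\leq j\leq n).
\]
Summing over $j$ and using the first identity,
\[
\sum_{j=1}^n \boldsymbol{\iota}_j\boldsymbol{\varepsilon}_j
= n\,\Id_{\Lambda}-\sum_{j=1}^n \boldsymbol{\varepsilon}_j\boldsymbol{\iota}_j
=(n-k)\,\Id_{\Lambda}\quad\text{on } \Lambda^k.
\]

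The only delicate point is the sign accounting in the first step: one must verify that the two factors of $(-1)^{r-1}$ (one coming from \autoref{cases}, the other from reinserting $e_{i_r}^*$ into the correct slot of the wedge product) genuinely cancel. Once this is done, the rest is immediate, and the second identity reduces to a one-line consequence of \autoref{ID}.
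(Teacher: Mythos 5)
Your proof is correct. For the first identity you argue exactly as the paper does: a direct check on the basis elements $e_I^*$ using \autoref{cases}, with the two factors of $(-1)^{r-1}$ cancelling — and you are right that this sign cancellation is the only point requiring care. For the second identity you take a slightly different (and arguably cleaner) route: instead of verifying $\sum_j \boldsymbol{\iota}_j\boldsymbol{\varepsilon}_j e_I^* = (n-k)e_I^*$ by a second direct basis computation split over $j\in I$ and $j\notin I$ (which is what the paper does, using $e_j^*\wedge e_j^*=0$ for $j\in I$), you sum the anticommutation relation \autoref{ID} with $x=y=e_j$ over $j$ and subtract the first identity. Both arguments are equally valid; yours trades the second case analysis for a one-line formal consequence of \autoref{ID}, at the cost of making the second identity logically dependent on the first rather than independently verified.
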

\begin{proof} Let $I=\{i_1, \ldots, i_k\}\subset \{1,2,\dots, n\}$. In view of \autoref{cases} and the fact that $e^*_j\wedge e^*_j = 0$ for every $j$,   clearly we have
$$\Big(\sum_{j\in I}{\boldsymbol \varepsilon}_j{\boldsymbol \iota}_j\Big) e^*_I = k e^*_I, \qquad \Big(\sum_{j\in I} {\boldsymbol \iota}_j {\boldsymbol \varepsilon}_j \Big)e^*_I = 0.$$
Similarly,
$$\Big(\sum_{j\notin I}{\boldsymbol \varepsilon}_j{\boldsymbol \iota}_j\Big)e^*_I  = 0, \qquad \Big(\sum_{j\notin I} {\boldsymbol \iota}_j {\boldsymbol \varepsilon}_j \Big)e^*_I = (n-k) e^*_I.$$  The lemma is now a matter of  putting pieces together.
\end{proof}

From now on, we will identify the dual of $\mathbb{R}^n$ with $\mathbb{R}^n$. 
For $0\leq k\leq n$, let 
$$\mathcal{E}^k(\mathbb{R}^n)=C^\infty\left(\mathbb{R}^n, \Lambda^k \right)\simeq C^\infty(\mathbb{R}^n)\otimes \Lambda^k(\mathbb R^n)$$ 
be the space of smooth complex-valued differential forms of degree $k$ on $\mathbb{R}^n$. An  element of  $\mathcal{E}^k(\mathbb{R}^n)$ can be uniquely represented as
\begin{equation}\label{repkfo}
\omega (x)=\sum_{1\leq i_1<\cdots<i_k\leq n} \omega_{i_1,\cdots,i_k}(x) e_{i_1,\cdots,i_k},
\end{equation}
where the coefficients $\omega_I$ are   complex-valued smooth functions on $\mathbb R^n.$  
 In particular, $\mathcal E^0(\mathbb R^n) =C^\infty(\mathbb R^n) $. The direct sum $$\mathcal E (\mathbb R^n) :=  \bigoplus_{k=0}^n  \mathcal E^k(\mathbb R^n) $$ is the linear space of all smooth differential forms. 

From the properties of the interior product $  {\boldsymbol \iota}_x$ and the exterior product ${\boldsymbol \varepsilon}_x$ on $\Lambda^k$ which we discussed above, one gets analogue properties on $\mathcal E^k(\mathbb R^n).$
 
 We now define the exterior differential    ${\boldsymbol d}: \mathcal E^k(\mathbb R^n)   \longrightarrow  \mathcal E^{k+1} (\mathbb R^n)$   by
 $${\boldsymbol d}    =\sum_{m=1}^n {\boldsymbol \varepsilon}_{m} \partial_{m}       , $$
 and the  co-differential  ${\boldsymbol   \delta} :    \mathcal E^{k+1}  (\mathbb R^n) \longrightarrow  \mathcal E^k(\mathbb R^n) $ by 
 $${\boldsymbol   \delta}  = - \sum_{m=1}^n   {\boldsymbol \iota}_{m}      \partial_{m}  ,$$
  where $\partial_{m }$ is the directional derivative in the direction of the basis vector $e_m.$ We set ${\boldsymbol   \delta} =0$ on $\mathcal E^0(\mathbb R^n)=C^\infty(\mathbb R^n).$ In the light of  \autoref{commi}, \autoref{comm} and   \autoref{ID},  direct computations show ${\boldsymbol d} \circ {\boldsymbol d}={\boldsymbol   \delta} \circ {\boldsymbol   \delta}=0 $  
  and 
  \begin{equation}\label{tikk1}  
 {\boldsymbol d}  {\boldsymbol \iota}_{j} + {\boldsymbol \iota}_{j}  {\boldsymbol d} =\partial_j,\qquad    {\boldsymbol \delta}  {\boldsymbol \varepsilon}_{j} + {\boldsymbol \varepsilon}_{j}  {\boldsymbol \delta} =-\partial_j,\qquad 0\leq j\leq n.
  \end{equation}    

We close this section by introducing the   Hodge Laplacian on differential  forms, defined by 
\begin{equation}\label{hodge} \Delta:= - ({\boldsymbol d} {\boldsymbol \delta} +{\boldsymbol \delta}{\boldsymbol d}) = \sum_{m=1}^n \partial_m^2.\end{equation}
 Henceforth we will denote the Hodge Laplacian  by $Q\big({\partial \over {\partial x}}\big)$ where 
 \begin{equation}\label{qua} 
 Q(x):=x_1^2+\cdots+x_n^2.
 \end{equation}

   \section{The main identities for Riesz distributions on differential forms}   
For ${s}$ a complex parameter, consider for    $\varphi\in \mathcal S(\mathbb  R^n)$  the formula \begin{equation}\label{cla}  \big\langle\mathcal R_{s} ,\varphi\big\rangle=   \pi^{-{n\over 2} }2^{-n-s} { {\Gamma\left(-{{s}\over 2}\right)} \over  {\Gamma\left( {{n+{s}}\over 2}\right)}}\int_{\mathbb R^n} \varphi(x) \Vert x\Vert^{s} dx .
 \end{equation}
For  $-n<\Rel {s} <0$, this formula defines a tempered distribution depending holomorphically on $s.$ The normalization factor is chosen for convenience, as we shall see below (see \autoref{friez}). By standard argument (see \cite{gels}) it can be analytically continued to $\mathbb C$, yielding a meromorphic family of tempered distributions, called the \emph{Riesz distributions}.
  
We follow the following convention for the Fourier transform on Schwartz functions $\varphi \in \mathcal S(\mathbb R^n)$:
$$\mathcal F(\varphi)(\xi)=\int_{\mathbb R^n} \varphi(x) e^{i\langle x,\xi\rangle}dx,$$ which extends to the space of tempered distributions $\mathcal S'(\mathbb R^n).$ It is known (see \cite{gels} or \cite[p. 38]{stri}) that the image of $\mathcal R_{s}$ by the Fourier transform is 
\begin{equation}\label{friez}
 \mathcal F(\mathcal R_{s})(\xi) = \Vert \xi\Vert^{-n-{s}}. \end{equation}

The classical Riesz distributions offer a good motivation for defining Riesz distributions for differential forms on $\mathbb R^n$ with coefficients in the Schwartz space $\mathcal S(\mathbb R^n).$ 

For  $0\leq k\leq n$, let $ \mathcal S\mathcal E^k(\mathbb R^n)   $ (resp. $ \mathcal S'\mathcal E^k(\mathbb R^n)   $) be the space of differential $k$-forms represented as in \autoref{repkfo} with coefficients 
$\omega_{i_1,   \ldots, i_k}$ in $\mathcal S(\mathbb R^n)$ (resp. $\mathcal S'(\mathbb R^n)$ ). We pin down that we may extend the Fourier  transform $\mathcal F$   on the space $   \mathcal S\mathcal E^k(\mathbb R^n)   $ by acting on the coefficients $\omega_{i_1,   \ldots, i_k}$  of the form  \autoref{repkfo}. 
  
For $0\leq k\leq n$ and a complex parameter  $s$,  let 
  $ \mathcal R_{s}^k$ be the \emph{Riesz distribution on differential forms} defined by

  \begin{equation}\label{riezf2} 
  \langle \mathcal R_{s}^k,  \omega\rangle    =   \pi^{-{n\over 2}}  2^{-{s}-n+1} {{\Gamma\left(-{{s}\over 2}+1\right)}\over {\Gamma\left({{{s}+n}\over 2}\right)}}      \int_{\mathbb R^n}    \Vert x\Vert ^{s-2} ({\boldsymbol \iota}_{x} {\boldsymbol \varepsilon}_{x} -{\boldsymbol \varepsilon}_{x} {\boldsymbol \iota}_{x}) \omega(x) dx, 
  \end{equation} 
 with  $\omega\in  \mathcal S\mathcal E^k (\mathbb R^n).$ For $-n<\Rel {s} <0$, this formula defines a tempered distribution depending holomorphically on $s.$ By \cite[\S 3.2]{fo},    $  \mathcal R_{s}^k$ can be analytically  continued to $\mathbb C$, giving a meromorphic family of tempered distributions.  When  $k=0,$ the identity 
  ${\boldsymbol \iota}_{x} {\boldsymbol \varepsilon}_{x} +{\boldsymbol \varepsilon}_{x} {\boldsymbol \iota}_{x}=\Vert x\Vert^ 2\Id_\mathcal E  $ implies immediately that $\mathcal R_{s}^0$ is nothing but the   classical Riesz distribution \autoref{cla}.        
       

In    \cite[Theorem 3.2]{fo} the authors proved that the Fourier transform of  $\mathcal R_{s}^k$ is given by 
\begin{equation}\label{fo2}
\mathcal F(  \mathcal R_{s}^k)(\xi)=   \Vert \xi\Vert^{-{s}-n-2} \Bigl(-({s}+2k) {\boldsymbol \iota}_{\xi} {\boldsymbol \varepsilon}_{\xi}+(s+2n-2k) 
{\boldsymbol \varepsilon}_{\xi} {\boldsymbol \iota}_{\xi}\Bigr)
\end{equation}
for $s$ not a pole. 
 Due to the facts ${\boldsymbol \iota}_{\xi}=0$ and ${\boldsymbol \iota}_{\xi} {\boldsymbol \varepsilon}_{\xi}=\Vert \xi\Vert^2\Id_\mathcal E$ on $\mathcal S\mathcal E^0 (\mathbb R^n) =\mathcal S (\mathbb R^n),$ it follows that \autoref{fo2} for  $k=0$ coincides with  \autoref{friez}.
  
In order to simplify notation, it is convenient to let 
 \begin{equation}\label{Z}
\mathcal Z_s^k := \mathcal F(\mathcal R_{-{s}-n}^k)
\end{equation} i.e.
$$
\mathcal Z_s^k (x) =      \Vert x\Vert^{{s}-2}    \Big(({s}+n-2k) {\boldsymbol \iota}_{x} {\boldsymbol \varepsilon}_{x}-(s-n+2k) 
  {\boldsymbol \varepsilon}_{x} {\boldsymbol \iota}_{x}\Big).$$


  We should point that in all arguments  below we first prove the desired result when the complex parameter  $s$  is so that everything makes sense, 
  and   then  we extend it   meromorphically        to the   complex plane $\mathbb C.$ 
  
 We shall need the following crucial result (which might be  of some interest in its own right):
   \begin{theorem}\label{shif}
  The distribution  $\mathcal Z_{s}^k$ satisfies the following properties: 
\begin{eqnarray} 
 \mathcal Z_{s}^k (x) &=&  \mathcal Z_{{s}-2}^k (x) \Big(  a_{k,s}  {\boldsymbol \iota}_{x} {\boldsymbol \varepsilon}_{x}+   b_{k,s}    {\boldsymbol \varepsilon}_{x} {\boldsymbol \iota}_{x}\Big), \label{I}\\
 {\partial \over  {\partial {x_j}}} \, \mathcal Z_{s}^k (x) &=&    \mathcal Z_{{s}-2}^k (x) \Big(s x_j \Id +   c_{k,{s}}  {\boldsymbol \iota}_{x} {\boldsymbol \varepsilon}_{j}+    d_{k,{s}}    {\boldsymbol \varepsilon}_{x} {\boldsymbol \iota}_{j}\Big),\label{II}\\
Q \Big( {\partial \over  {\partial {x}}} \Big)\mathcal Z_{s}^k (x) &=&{s}({s}+n) \mathcal Z_{{s}-2}^k (x),\label{III}
 \end{eqnarray}
 with \begin{equation}\label{alphabeta}   a_{k,s}  =   {{{s}+n-2k}\over { {s}+n-2k-2}},\qquad   b_{k,s} =  {{{s}-n+2k}\over {{s}-n+2k-2}} ,
 \end{equation}
 and  \begin{equation}\label{gammadelta}   c_{k,{s}}  =   {{2{s}}\over { {s}+n-2k-2}},\qquad   d_{k,{s}} =  {{2{s}}\over {{s}-n+2k-2}}.\end{equation}
 Above $Q \big( {\partial \over  {\partial {x}}} \big)$ denotes the Hodge Laplacian (see \autoref{hodge}).
  \end{theorem}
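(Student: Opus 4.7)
The three identities all rest on the algebra of the operators $P_1(x):=\boldsymbol\iota_x\boldsymbol\varepsilon_x$ and $P_2(x):=\boldsymbol\varepsilon_x\boldsymbol\iota_x$. From $\boldsymbol\iota_x^2=\boldsymbol\varepsilon_x^2=0$ and \autoref{ID} one checks $P_1+P_2=\|x\|^2\Id$ and $P_iP_j=\delta_{ij}\|x\|^2 P_i$. To prove (I), write $\mathcal Z_s^k=\|x\|^{s-2}\bigl((s+n-2k)P_1-(s-n+2k)P_2\bigr)$ and compute
\[
\mathcal Z_{s-2}^k(a_{k,s}P_1+b_{k,s}P_2)=\|x\|^{s-2}\bigl(a_{k,s}(s+n-2k-2)P_1-b_{k,s}(s-n+2k-2)P_2\bigr):
\]
the cross terms vanish by $P_1P_2=P_2P_1=0$, and each diagonal term gains a factor $\|x\|^2$ via $P_i^2=\|x\|^2 P_i$. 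Matching with $\mathcal Z_s^k$ immediately forces the stated values of $a_{k,s}$ and $b_{k,s}$.

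For (II), differentiate directly: $\partial_j\|x\|^{s-2}=(s-2)x_j\|x\|^{s-4}$, and since $\boldsymbol\iota_x,\boldsymbol\varepsilon_x$ are linear in $x$, $\partial_j\boldsymbol\iota_x=\boldsymbol\iota_j$ and $\partial_j\boldsymbol\varepsilon_x=\boldsymbol\varepsilon_j$. Apply \autoref{ID} to replace $\boldsymbol\iota_j\boldsymbol\varepsilon_x=x_j\Id-\boldsymbol\varepsilon_x\boldsymbol\iota_j$ and $\boldsymbol\varepsilon_j\boldsymbol\iota_x=x_j\Id-\boldsymbol\iota_x\boldsymbol\varepsilon_j$, and exploit $A_s+B_s=2s$, $A_s-B_s=2(n-2k)$ where $A_s:=s+n-2k$, $B_s:=s-n+2k$. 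On the right-hand side, the expansion of $\mathcal Z_{s-2}^k(sx_j\Id+c_{k,s}\boldsymbol\iota_x\boldsymbol\varepsilon_j+d_{k,s}\boldsymbol\varepsilon_x\boldsymbol\iota_j)$ collapses dramatically: from $\boldsymbol\iota_x\boldsymbol\varepsilon_x\boldsymbol\iota_x=\|x\|^2\boldsymbol\iota_x$ one gets $P_1\boldsymbol\iota_x\boldsymbol\varepsilon_j=\|x\|^2\boldsymbol\iota_x\boldsymbol\varepsilon_j$, dually $P_2\boldsymbol\varepsilon_x\boldsymbol\iota_j=\|x\|^2\boldsymbol\varepsilon_x\boldsymbol\iota_j$, while $P_1\boldsymbol\varepsilon_x\boldsymbol\iota_j=P_2\boldsymbol\iota_x\boldsymbol\varepsilon_j=0$ by nilpotency. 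After rewriting the stray $x_j\Id$ on the left as $\|x\|^{-2}x_j(P_1+P_2)$, one matches the coefficients of $x_j\|x\|^{s-4}P_1$, $x_j\|x\|^{s-4}P_2$, $\|x\|^{s-2}\boldsymbol\iota_x\boldsymbol\varepsilon_j$, $\|x\|^{s-2}\boldsymbol\varepsilon_x\boldsymbol\iota_j$ separately; the decisive reduction $(s-2)(s+n-2k)+2(n-2k)=s(s+n-2k-2)$ gives $c_{k,s}=2s/(s+n-2k-2)$, and a symmetric computation yields $d_{k,s}$.

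For (III), the cleanest route is Fourier-theoretic. First prove the shift identity $\|x\|^2\mathcal R_t^k=-t(t+n)\mathcal R_{t+2}^k$: multiplying the integrand in \autoref{riezf2} by $\|x\|^2$ shifts $t-2\mapsto t$ in the exponent, and the ratio of normalization constants simplifies via $\Gamma(-t/2+1)=(-t/2)\Gamma(-t/2)$ and $\Gamma((t+n)/2+1)=((t+n)/2)\Gamma((t+n)/2)$ to exactly $-t(t+n)$. Taking Fourier transforms, using $\mathcal F(\|x\|^2 f)=-Q(\partial/\partial\xi)\mathcal F(f)$ together with the definition $\mathcal Z_s^k=\mathcal F(\mathcal R_{-s-n}^k)$ of \autoref{Z}, and substituting $t=-s-n$, one obtains $Q(\partial/\partial\xi)\mathcal Z_s^k=s(s+n)\mathcal Z_{s-2}^k$. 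Alternatively, (III) can be obtained directly from $\Delta\|x\|^{s-2}=(s-2)(s+n-4)\|x\|^{s-4}$, the preceding Lemma's trace identities $\sum_j\boldsymbol\iota_j\boldsymbol\varepsilon_j=(n-k)\Id$, $\sum_j\boldsymbol\varepsilon_j\boldsymbol\iota_j=k\Id$ on $\Lambda^k$, and $P_1+P_2=\|x\|^2\Id$.

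The principal obstacle is the bookkeeping in (II): nine operator products appear when expanding $\mathcal Z_{s-2}^k(sx_j\Id+c_{k,s}\boldsymbol\iota_x\boldsymbol\varepsilon_j+d_{k,s}\boldsymbol\varepsilon_x\boldsymbol\iota_j)$, and one must exploit $\boldsymbol\iota_x^2=\boldsymbol\varepsilon_x^2=0$ and the relations \autoref{ID}, \autoref{comm}, \autoref{commi} to isolate the four surviving terms and match their coefficients. Throughout, the identities are established first on a strip of absolute convergence of the defining integrals and then extended to all $s\in\mathbb C$ by meromorphic continuation.
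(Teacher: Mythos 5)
Your argument is correct and follows essentially the same route as the paper: identities \autoref{I} and \autoref{II} are verified by the same algebra of $\boldsymbol\iota_x\boldsymbol\varepsilon_x$ and $\boldsymbol\varepsilon_x\boldsymbol\iota_x$ (your relations $P_iP_j=\delta_{ij}\Vert x\Vert^2P_i$ are exactly the content of the paper's identity \autoref{trik}, recorded again later as \autoref{rcal}), the only difference being that you expand the right-hand side and match coefficients where the paper massages the left-hand side. For \autoref{III} the paper likewise takes the Fourier transform of the multiplication by $\Vert\cdot\Vert^2$; you merely make explicit the shift identity $\Vert x\Vert^2\mathcal R_t^k=-t(t+n)\mathcal R_{t+2}^k$ coming from the Gamma-factor normalization, which the paper leaves implicit.
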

  \begin{proof}
 (1) On one hand,  we may rewrite   $\mathcal Z_s^k (x)$  as 
  $$\mathcal Z_{s}^k (x) = 
 \Vert x\Vert^2 \mathcal Z_{{s}-2}^k(x) -2{s}({s}+n)  \Vert x\Vert^{{s}-2} ( {\boldsymbol \iota}_{x} {\boldsymbol \varepsilon}_{x}-
  {\boldsymbol \varepsilon}_{x}{\boldsymbol \iota}_{x}) .$$
 On the other hand,  using  the fact ${\boldsymbol \iota}_{x} {\boldsymbol \varepsilon}_{x}+
  {\boldsymbol \varepsilon}_{x}{\boldsymbol \iota}_{x}=\Vert x\Vert^2\Id_{\mathcal E}$, we may rewrite the term $\Vert x\Vert^{2} ( {\boldsymbol \iota}_{x} {\boldsymbol \varepsilon}_{x}-
  {\boldsymbol \varepsilon}_{x}{\boldsymbol \iota}_{x}) $ as follows: 
\begin{equation}\label{trik}
  \Vert x\Vert^{2} ( {\boldsymbol \iota}_{x} {\boldsymbol \varepsilon}_{x}-
  {\boldsymbol \varepsilon}_{x}{\boldsymbol \iota}_{x})= \Big( (s+n-2k-2) {\boldsymbol \iota}_{x} {\boldsymbol \varepsilon}_{x} -(s-n+2k-2)  {\boldsymbol \varepsilon}_{x}{\boldsymbol \iota}_{x}\Big) \Big(a'_{k,s} {\boldsymbol \iota}_{x} {\boldsymbol \varepsilon}_{x} +b'_{k,s} {\boldsymbol \varepsilon}_{x}{\boldsymbol \iota}_{x}\Big),\end{equation}
  where 
  \begin{equation*}
  a'_{k,{s}}= {1\over{{s}+n-2k-2}},\qquad b'_{k,{s}}={1\over {{s}-n+2k-2}}.
  \end{equation*}
  Thus, 
  $$\mathcal Z_{s}^k (x) =   \Vert x\Vert^2 \mathcal Z_{{s}-2}^k (x) +2  \mathcal Z_{{s}-2}^k (x)  \ (a'_{k,s} 
  {\boldsymbol \iota}_{x} {\boldsymbol \varepsilon}_{x} +b'_{k,s} {\boldsymbol \varepsilon}_{x}{\boldsymbol \iota}_{x}) . $$
  Using again the identity  ${\boldsymbol \iota}_{x} {\boldsymbol \varepsilon}_{x}+
  {\boldsymbol \varepsilon}_{x}{\boldsymbol \iota}_{x}=\Vert x\Vert^2\Id_{\mathcal E}$ to  deduce the first statement.
  
  (2) First we have 
  \begin{eqnarray*}
{\partial \over  {\partial {x_j}}}({\boldsymbol \iota}_{x} {\boldsymbol \varepsilon}_{x}) &=&{\boldsymbol \iota}_{j} {\boldsymbol \varepsilon}_{x} +{\boldsymbol \iota}_{x} {\boldsymbol \varepsilon}_{j} = x_j \Id_{\mathcal E}-  {\boldsymbol \varepsilon}_{x}  {\boldsymbol \iota}_{j} +  {\boldsymbol \iota}_{x}  {\boldsymbol \varepsilon}_{j},\label{a}\\
  {\partial \over  {\partial {x_j}}} ( {\boldsymbol \varepsilon}_{x} {\boldsymbol \iota}_{x}) &=& {\boldsymbol \varepsilon}_{j} {\boldsymbol \iota}_{x}  + {\boldsymbol \varepsilon}_{x}{\boldsymbol \iota}_{j}  = x_j \Id_{\mathcal E}+  {\boldsymbol \varepsilon}_{x}  {\boldsymbol \iota}_{j} -  {\boldsymbol \iota}_{x}  {\boldsymbol \varepsilon}_{j}, \label{b}
   \end{eqnarray*}
   where, for abbreviation, $ {\boldsymbol \iota}_{j}$ (resp. ${\boldsymbol \varepsilon}_{j}$)  denotes ${\boldsymbol \iota}_{e_j}$ (resp. ${\boldsymbol \varepsilon}_{e_j}$). Above we have used the identity   \autoref{ID}. Then 
   \begin{align*}
 & {\partial \over  {\partial {x_j}}} \mathcal Z_{s}^k (x) 
   = 
    ({s}-2) x_j \Vert x\Vert^{{s}-4} \Big(({s}+n-2k)  {\boldsymbol \iota}_{x} {\boldsymbol \varepsilon}_{x}  -({s}-n+2k) {\boldsymbol \varepsilon}_{x}{\boldsymbol \iota}_{x} \Big) \\
& +  \Vert x\Vert^{{s}-2} \Big(  ({s}+n-2k)  ( x_j \Id_{\mathcal E}-  {\boldsymbol \varepsilon}_{x}  {\boldsymbol \iota}_{j} +  {\boldsymbol \iota}_{x}  {\boldsymbol \varepsilon}_{j})  -({s}-n+2k)  (x_j \Id_{\mathcal E}+  {\boldsymbol \varepsilon}_{x}  {\boldsymbol \iota}_{j} -  {\boldsymbol \iota}_{x}  {\boldsymbol \varepsilon}_{j} )\Big)\\
&  =     ({s}-2) x_j \Vert x\Vert^{{s}-4} \Big( 
\big( ({s}-2+n-2k)  {\boldsymbol \iota}_{x} {\boldsymbol \varepsilon}_{x}   -({s}-2-n+2k) {\boldsymbol \varepsilon}_{x}{\boldsymbol \iota _{x}\big) +2
( {\boldsymbol \iota}_{x} {\boldsymbol \varepsilon}_{x} - \boldsymbol \varepsilon}_{x}{\boldsymbol \iota}_{x} ) \Big)\\
 &   +  \Vert x\Vert^{{s}-2} \Big( 2 ( n-2k)x_j  \Id_{\mathcal E} + 2{s}  
 (  {\boldsymbol \iota}_{x}  {\boldsymbol \varepsilon}_{j}  - {\boldsymbol \varepsilon}_{x} {\boldsymbol \iota}_{j}   )\Big)\\
&  =   ({s}-2) x_j \mathcal Z_{{s}-2}^k (x)+    2 x_j \Vert x\Vert^{{s}-4} \Big(
(n-2k) \Vert x\Vert^2 \Id_{\mathcal E}+({s}-2) (  {\boldsymbol \iota}_{x}  {\boldsymbol \varepsilon}_{j}  - {\boldsymbol \varepsilon}_{x} {\boldsymbol \iota}_{j}   )\Big)\\
& + 
   2 {s} \Vert x\Vert^{{s}-2} (  {\boldsymbol \iota}_{x}  {\boldsymbol \varepsilon}_{j}  - {\boldsymbol \varepsilon}_{x} {\boldsymbol \iota}_{j}   )\\
&   =    ({s}-2) x_j \mathcal Z_{{s}-2}^k (x) +{2 } x_j \mathcal Z_{{s}-2}^k (x) +2{s}   \Vert x\Vert^{{s}-2} 
  (  {\boldsymbol \iota}_{x}  {\boldsymbol \varepsilon}_{j}  - {\boldsymbol \varepsilon}_{x} {\boldsymbol \iota}_{j}   ). 
  \end{align*}
Now using again the   trick    \autoref{trik}   we obtain 
 $$ {\partial \over  {\partial {x_j}}} \mathcal Z_{s}^k (x)  =  {s}    x_j \mathcal Z_{{s}-2}^k (x)  +2{s}    \mathcal Z_{{s}-2}^k (x)
     ( a'_{k,{s}}{\boldsymbol \iota}_{x}  {\boldsymbol \varepsilon}_{j}  +b'_{k,{s}}{\boldsymbol \varepsilon}_{x} {\boldsymbol \iota}_{j}   ),$$
  and \autoref{II} follows.
     
(3) The definition  of   $\mathcal Z_{s}^k (x)$ and the    intertwining property $ Q\big ( {\partial \over  {\partial {x }}}\big) \circ \mathcal F= -\mathcal F\circ \Vert \cdot\Vert^2 $ imply 
\begin{equation*}
Q\Big ( {\partial \over  {\partial {x }}}\Big)  \,\mathcal Z_{s}^k (x) =- \mathcal F( \Vert \cdot \Vert ^2 \mathcal R_{-{s}-n}^k)(x)
={s}({s}+n) \mathcal F(  \mathcal R_{-{s}-n+2}^k)(x) ={s}({s}+n) \mathcal Z_{{s}-2}^k (x).
\end{equation*}
  \end{proof}
  
  For $0\leq k,\ell\leq n,$ we define the  space $\mathcal E^{k,\ell}(\mathbb R^n\times \mathbb R^n) $ as the space of smooth functions on $\mathbb R^n\times \mathbb R^n$ with values in $\Lambda^k \otimes \Lambda^\ell.$  
  More generally, denote by 
  $\mathcal D\mathcal E^{k,\ell }(\mathbb R^n\times \mathbb R^n)$ (resp.  $\mathcal S\mathcal E^{k,\ell}(\mathbb R^n\times \mathbb R^n)$, $\mathcal S'\mathcal E^{ k,\ell}  (\mathbb R^n\times\mathbb R^n)$) the space of differential forms of bidegree $(k,\ell)$ on $\mathbb R^n\times \mathbb R^n$ with  coefficients in $\mathcal D(\mathbb R^n\times \mathbb R^n)$ (resp. $\mathcal S(\mathbb R^n\times \mathbb R^n)$, $\mathcal S'(\mathbb R^n\times\mathbb R^n)$)


  \newpage
  
  \begin{theorem}\label{main1}   
For every  $\omega \in  \mathcal S\mathcal E^{k,\ell} (\mathbb R^n\times\mathbb R^n) $, the following formula hols true 
  $$Q\left( {{\partial}\over {\partial x}} - {{\partial}\over {\partial y}}\right)  \left( \mathcal Z_{s}^k(x)\otimes \mathcal Z_{t}^\ell(y) \; \omega(x,y) \right) = \mathcal Z_{{s}-2}^k(x) \otimes \mathcal Z _{{t}-2}^\ell(y) \,   D_{{s},{t}}^{k,\ell} \; \omega(x,y),   $$  where   $ D_{s,t}^{k,\ell}$ is the differential operator on $(k,\ell)$-differential forms  given by  
 
\begin{eqnarray*}
  D_{s,t}^{k,\ell}&=&    \Big(  a_{k,s}  {\boldsymbol \iota}_{x}  {\boldsymbol \varepsilon}_{x}  + b_{k,s}  {\boldsymbol \varepsilon}_{x} {\boldsymbol \iota}_{x}\Big) \otimes \Big(  a_{\ell,t}  {\boldsymbol \iota}_{y}  {\boldsymbol \varepsilon}_{y} + b_{\ell,t}  {\boldsymbol \varepsilon}_{y} {\boldsymbol \iota}_{y}\Big)  \circ Q \left({{\partial}\over {\partial x}}- {{\partial}\over {\partial y}}\right)\\
  &&+ 2  \sum_{j=1}^n      
  \Big(sx_j\Id_{\mathcal E^k} + c_{k,s} {\boldsymbol \iota}_{x}  {\boldsymbol \varepsilon}_{j}  +d_{k,s} {\boldsymbol \varepsilon}_{x} {\boldsymbol \iota}_{j}\Big)\otimes 
  \Big(   a_{\ell,t}  {\boldsymbol \iota}_{y}  {\boldsymbol \varepsilon}_{y} + b_{\ell,t}  {\boldsymbol \varepsilon}_{y} {\boldsymbol \iota}_{y}\Big) \circ  \Big({{\partial}\over {\partial x_j}}  -  {{\partial}\over {\partial y_j}}  \Big)  \\
  &&- 2  \sum_{j=1}^n      
   \Big(   a_{k,s}  {\boldsymbol \iota}_{x}  {\boldsymbol \varepsilon}_{x} + b_{k,s}  {\boldsymbol \varepsilon}_{x} {\boldsymbol \iota}_{x}\Big)
  \otimes 
     \Big(ty_j\Id_{\mathcal E^\ell} + c_{\ell,t} {\boldsymbol \iota}_{y}  {\boldsymbol \varepsilon}_{j}  + d_{\ell,t} {\boldsymbol \varepsilon}_{y} {\boldsymbol \iota}_{j}\Big)\circ  \Big({{\partial}\over {\partial x_j}}  -  {{\partial}\over {\partial y_j}}  \Big) \\
     &&  -{{2 } }   \sum_{j=1}^n \Big(sx_j\Id_{\mathcal E^k} + c_{k,s} {\boldsymbol \iota}_{x}  {\boldsymbol \varepsilon}_{j}  +d_{k,s} {\boldsymbol \varepsilon}_{x} {\boldsymbol \iota}_{j}\Big) \otimes \Big(ty_j\Id_{\mathcal E^\ell} + c_{\ell,t} {\boldsymbol \iota}_{y}  {\boldsymbol \varepsilon}_{j} + d_{\ell,t} {\boldsymbol \varepsilon}_{y} {\boldsymbol \iota}_{j}\Big)  \\
 &&+    {s}({s}+n)  \Id_{\mathcal E^k} \otimes \Big(  a_{\ell,t}  {\boldsymbol \iota}_{y}  {\boldsymbol \varepsilon}_{y}+ b_{\ell,t} {\boldsymbol \varepsilon}_{y} {\boldsymbol \iota}_{y} \Big) 
 +  {t}({t}+n)   \Big(   a_{k,s}  {\boldsymbol \iota}_{x}  {\boldsymbol \varepsilon}_{x}+  b_{k,s} {\boldsymbol \varepsilon}_{x} {\boldsymbol \iota}_{x} \Big) \otimes \Id_{\mathcal E^\ell} .
 \end{eqnarray*} 
The coefficients  $a_{k,{s}}$, $b_{k,{s}}$, $c_{k,{s}}$ and $d_{k,{s}}$  are given by  \autoref{alphabeta} and \autoref{gammadelta} (and similarly when the subscripts $k,{s}$ are  replaced by   $\ell,{t}$). 
  \end{theorem}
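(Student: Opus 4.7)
The plan is to expand $Q\!\left(\tfrac{\partial}{\partial x}-\tfrac{\partial}{\partial y}\right)=\sum_{j=1}^n\!\left(\tfrac{\partial}{\partial x_j}-\tfrac{\partial}{\partial y_j}\right)^{\!2}$ and apply it, one $j$ at a time, to the product $F(x,y)\,\omega(x,y)$ with $F(x,y):=\mathcal Z_s^k(x)\otimes\mathcal Z_t^\ell(y)$. Since $\partial_{x_j}-\partial_{y_j}$ is a first-order derivation, its square on a product of a scalar coefficient with $\omega$ splits by Leibniz into the three familiar pieces: a term $\bigl((\partial_{x_j}-\partial_{y_j})^2F\bigr)\,\omega$, a cross term $2\bigl((\partial_{x_j}-\partial_{y_j})F\bigr)\bigl((\partial_{x_j}-\partial_{y_j})\omega\bigr)$, and a term $F\cdot(\partial_{x_j}-\partial_{y_j})^2\omega$. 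The tensor-product structure of $F$ lets me further decompose the first two of these into purely $x$-derivatives, purely $y$-derivatives, and a mixed piece, and I will replace every derivative of $\mathcal Z_s^k$ or $\mathcal Z_t^\ell$ by Theorem~\ref{shif}.

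Concretely, summing $(\partial_{x_j}-\partial_{y_j})^2 F$ over $j$ and using \autoref{III} on both factors produces
\[
Q(\partial_x)\mathcal Z_s^k(x)\otimes\mathcal Z_t^\ell(y)+\mathcal Z_s^k(x)\otimes Q(\partial_y)\mathcal Z_t^\ell(y)-2\sum_{j=1}^n\partial_{x_j}\mathcal Z_s^k(x)\otimes\partial_{y_j}\mathcal Z_t^\ell(y),
\]
which, after applying \autoref{III} and \autoref{II} to each factor, contributes the last two lines of the claimed formula for $D_{s,t}^{k,\ell}$. The cross term $2\sum_j(\partial_{x_j}-\partial_{y_j})F\cdot(\partial_{x_j}-\partial_{y_j})\omega$ splits into an $x$-derivative of $\mathcal Z_s^k$ times the unchanged $\mathcal Z_t^\ell$, and symmetrically; using \autoref{II} on the differentiated factor and \autoref{I} on the undifferentiated one (to extract the common factor $\mathcal Z_{s-2}^k(x)\otimes\mathcal Z_{t-2}^\ell(y)$) yields the middle two lines of the statement, with the correct signs coming from the $-\partial_{y_j}$ contributions. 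Finally the term $F\cdot Q(\partial_x-\partial_y)\omega$ is rewritten using \autoref{I} on both factors of $F$, giving the very first line of $D_{s,t}^{k,\ell}$.

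Once the three contributions are laid out, every summand is proportional to $\mathcal Z_{s-2}^k(x)\otimes\mathcal Z_{t-2}^\ell(y)$, so the latter can be factored out on the left and the remaining differential operator read off as $D_{s,t}^{k,\ell}\omega$. The substance of the argument is thus the Leibniz expansion plus Theorem~\ref{shif}; the main obstacle is purely bookkeeping: one must carefully track the minus sign in $\partial_{x_j}-\partial_{y_j}$ through the cross terms (which is why the two ``middle'' blocks come with opposite signs in the statement), and avoid confusing the contraction/exterior operators $\boldsymbol{\iota}_x,\boldsymbol{\varepsilon}_x$ acting in the $\Lambda^k$-factor with $\boldsymbol{\iota}_y,\boldsymbol{\varepsilon}_y$ acting in the $\Lambda^\ell$-factor. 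As noted at the start of Section~3, the identity is first proved for $\Rel s,\Rel t$ in a strip where $\mathcal Z_s^k,\mathcal Z_t^\ell$ are given by locally integrable functions, so that pointwise differentiation is legitimate; both sides then extend meromorphically in $(s,t)$ to all of $\mathbb C^2$.
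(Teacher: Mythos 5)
Your proposal is correct and follows essentially the same route as the paper: the same Leibniz expansion of $Q\bigl(\tfrac{\partial}{\partial x}-\tfrac{\partial}{\partial y}\bigr)$ into the term hitting $\mathcal Z_{s}^k\otimes\mathcal Z_{t}^\ell$, the cross term, and the term hitting $\omega$, with each derivative of the $\mathcal Z$-factors replaced via Theorem \ref{shif} and identity \autoref{I} used on the undifferentiated factors to extract $\mathcal Z_{s-2}^k\otimes\mathcal Z_{t-2}^\ell$. The attribution of each of the three Leibniz pieces to the corresponding blocks of $D_{s,t}^{k,\ell}$ matches the paper's computation exactly.
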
 
  \begin{proof}
A routine  calculation gives  
\begin{align*}
&Q\left( {{\partial}\over {\partial x}} - {{\partial}\over {\partial y}}\right)  \Big( \mathcal Z_{s }^k(x)\otimes \mathcal Z _{t }^\ell(y)  \omega(x,y) \Big)   =  
\Big( Q\left( {{\partial}\over {\partial x}} - {{\partial}\over {\partial y}}\right)    \mathcal Z_{s }^k(x)\otimes \mathcal Z _{t }^\ell(y) \Big)    \omega(x,y) \\
 &+ \mathcal Z_{s }^k(x) \otimes \mathcal Z _{t }^\ell(y) Q \left( {{\partial}\over {\partial x}} - {{\partial}\over {\partial y}}\right)   \omega(x,y)  
 + \widetilde Q\left( {{\partial}\over {\partial x}} , {{\partial}\over {\partial y}}\right) \Big(  \mathcal Z_{s }^k(x)\otimes \mathcal Z _{t }^\ell(y)  \omega(x,y) \Big),  
\end{align*}
where 
  \begin{align*}
\lefteqn{
\widetilde Q\left( {{\partial}\over {\partial x}} , {{\partial}\over {\partial y}}\right) \Big(  \mathcal Z_{s }^k(x)\otimes \mathcal Z _{t }^\ell(y)  \omega(x,y) \Big)}\\
  =&     2 \sum_{j=1}^n\Big(  {{\partial}\over {\partial x_j}}  \mathcal Z_s^k(x)\otimes \mathcal Z_t^\ell (y)  {{\partial}\over {\partial x_j}}  \omega(x,y) +   \mathcal Z_s^k(x)\otimes  {{\partial}\over {\partial y_j}}  \mathcal Z_t^\ell (y) {{\partial}\over {\partial y_j}} \omega(x,y)\Big) \\
&  -2 \sum_{j=1}^n  \Big(\mathcal Z_s^k(x)\otimes {{\partial}\over {\partial y_j}} \mathcal Z_t^\ell (y) {{\partial}\over {\partial x_j}}  \omega(x,y) 
   -    {{\partial}\over {\partial x_j}} \mathcal Z_s^k(x)\otimes \mathcal Z_t^\ell (y) {{\partial}\over {\partial y_j}} \omega(x,y)\Big).
\end{align*}  
Firstly, in view of the identities  \autoref{II} and \autoref{III},  we have
 
    \begin{align*}
&\Big(Q\left( {{\partial}\over {\partial x}} - {{\partial}\over {\partial y}}\right)    \mathcal Z_{s }^k(x)\otimes \mathcal Z _{t }^\ell(y)  \Big)   \omega(x,y) \\
=&   Q \left( {{\partial}\over {\partial x}}\right) \mathcal Z_{s }^k(x)\otimes \mathcal Z_t^\ell (y)   \omega(x,y)+
   \mathcal Z_{s }^k(x)\otimes Q \left( {{\partial}\over {\partial y}}\right)  \mathcal Z_t^\ell (y)   \omega(x,y)\\
 & -2 \sum_{j=1}^n {{\partial}\over {\partial x_j}} \mathcal Z_s^k(x)\otimes   {{\partial}\over {\partial y_j}} \mathcal Z_t^\ell (y)   \omega(x,y)\\
    =&
s(s+n)    \mathcal Z_{s-2}^k(x)\otimes \mathcal Z _{t}^\ell(y)   \omega(x,y)
+t(t+n) \mathcal Z_{s }^k(x)\otimes \mathcal Z _{t-2}^\ell(y)   \omega(x,y) \\
&  -2 
 \mathcal Z_{s-2}^k(x)\otimes \mathcal Z _{t-2}^\ell(y)  \\
&  \Big\{\sum_{j=1}^n \Big(sx_j\Id_{\mathcal E^k} +c_{k,s} {\boldsymbol \iota}_{x}  {\boldsymbol \varepsilon}_{j}  +d_{k,s} {\boldsymbol \varepsilon}_{x} {\boldsymbol \iota}_{j}\Big)\otimes \Big(ty_j\Id_{\mathcal E^\ell} +c_{\ell,t} {\boldsymbol \iota}_{y}  {\boldsymbol \varepsilon}_{j} +d_{\ell,t} {\boldsymbol \varepsilon}_{y} {\boldsymbol \iota}_{j}\Big)\Big\} \omega(x,y)\\
    =&\mathcal Z_{s-2}^k(x)\otimes \mathcal Z _{t-2}^\ell(y) \\
 & \Big\{
 s(s+n) \Id_{\mathcal E^k} \otimes \Big(  a_{\ell,t}  {\boldsymbol \iota}_{y}  {\boldsymbol \varepsilon}_{y}+ b_{\ell,t} {\boldsymbol \varepsilon}_{y} {\boldsymbol \iota}_{y} \Big )
 +
t(t+n)  \Big(  a_{k,s}  {\boldsymbol \iota}_{x}  {\boldsymbol \varepsilon}_{x}+ b_{k,s} {\boldsymbol \varepsilon}_{x} {\boldsymbol \iota}_{x} \Big)\otimes \Id_{\mathcal E^\ell}
  \\
&   -{{2} } \sum_{j=1}^n \Big(sx_j\Id_{\mathcal E^k} + c_{k,s} {\boldsymbol \iota}_{x}  {\boldsymbol \varepsilon}_{j}  + d_{k,s} {\boldsymbol \varepsilon}_{x} {\boldsymbol \iota}_{j}\Big)\otimes \Big(ty_j\Id_{\mathcal E^\ell} + c_{\ell,t} {\boldsymbol \iota}_{y}  {\boldsymbol \varepsilon}_{j} + d_{\ell,t} {\boldsymbol \varepsilon}_{y} {\boldsymbol \iota}_{j}\Big)\Big\} \omega(x,y).
  \end{align*} 
 
Secondly, the identity  \autoref{I} gives 
 
\begin{align*}
 &\mathcal Z_{s }^k(x)\otimes \mathcal Z _{t }^\ell(y) Q \left( {{\partial}\over {\partial x}} - {{\partial}\over {\partial y}}\right)   \omega(x,y)  \\
&=  \mathcal Z_{s-2}^k(x)\otimes \mathcal Z _{t-2}^\ell(y) 
 \Big\{
   \Big(  a_{k,s}  {\boldsymbol \iota}_{x}  {\boldsymbol \varepsilon}_{x}  + b_{k,s}  {\boldsymbol \varepsilon}_{x} {\boldsymbol \iota}_{x}\Big)\otimes \Big(  a_{\ell,t}  {\boldsymbol \iota}_{y}  {\boldsymbol \varepsilon}_{y} + b_{\ell,t}  {\boldsymbol \varepsilon}_{y} {\boldsymbol \iota}_{y}\Big)  
Q\left({{\partial}\over {\partial x}} - {{\partial}\over {\partial y}}\right)\Big\}\omega(x,y).
\end{align*}
 
Finally, using again  \autoref{I}  and \autoref{II}   we  obtain 
\begin{align*}
&\widetilde Q\left( {{\partial}\over {\partial x}} , {{\partial}\over {\partial y}}\right) \left(  \mathcal Z_{s }^k(x)\otimes \mathcal Z _{t }^\ell(y)  \omega(x,y) \right)
  = 2\mathcal Z_{s-2}^k(x)\otimes \mathcal Z _{t-2}^\ell(y)\Big\{\\
  & \sum_{j=1}^n      \Big(sx_j\Id_{\mathcal E^k} + c_{k,s} {\boldsymbol \iota}_{x}  {\boldsymbol \varepsilon}_{j}  + d_{k,s} {\boldsymbol \varepsilon}_{x} {\boldsymbol \iota}_{j}\Big)\otimes 
  \Big(  a_{\ell,t}  {\boldsymbol \iota}_{y}  {\boldsymbol \varepsilon}_{y} + b_{\ell,t}  {\boldsymbol \varepsilon}_{y} {\boldsymbol \iota}_{y}\Big) {{\partial \omega}\over{\partial {x_j}}} (x,y)\\
  & +\sum_{j=1}^n      \Big( a_{k,s} {\boldsymbol \iota}_{x}  {\boldsymbol \varepsilon}_{x} + b_{k,s}  {\boldsymbol \varepsilon}_{x} {\boldsymbol \iota}_{x}\Big)
  \otimes 
     \Big(ty_j\Id_{\mathcal E^\ell} + c_{\ell,t} {\boldsymbol \iota}_{y}  {\boldsymbol \varepsilon}_{j}  + d_{\ell,t} {\boldsymbol \varepsilon}_{y} {\boldsymbol \iota}_{j}\Big)
  {{\partial \omega}\over{\partial {y_j}}}(x,y)\\
  &- \sum_{j=1}^n   
   \Big(  a_{k,s}  {\boldsymbol \iota}_{x}  {\boldsymbol \varepsilon}_{x} + b_{k,s}  {\boldsymbol \varepsilon}_{x} {\boldsymbol \iota}_{x}\Big)
  \otimes 
     \Big(ty_j\Id_{\mathcal E^\ell} + c_{\ell,t} {\boldsymbol \iota}_{y}  {\boldsymbol \varepsilon}_{j}  + d_{\ell,t} {\boldsymbol \varepsilon}_{y} {\boldsymbol \iota}_{j}\Big)
{{\partial \omega}\over{\partial {x_j}}}(x,y)\\
&-  \sum_{j=1}^n   \Big(sx_j\Id_{\mathcal E^k} + c_{k,s} {\boldsymbol \iota}_{x}  {\boldsymbol \varepsilon}_{j}  + d_{k,s} {\boldsymbol \varepsilon}_{x} {\boldsymbol \iota}_{j}\Big)\otimes 
  \Big(  a_{\ell,t}  {\boldsymbol \iota}_{y}  {\boldsymbol \varepsilon}_{y} + b_{\ell,t}  {\boldsymbol \varepsilon}_{y} {\boldsymbol \iota}_{y}\Big) {{\partial \omega}\over{\partial {y_j}}} (x,y)\Big\}\\
  =& 2\mathcal Z_{s-2}^k(x)\otimes \mathcal Z _{t-2}^\ell(y)\Big\{\\
  &  \sum_{j=1}^n    
  \Big(sx_j\Id_{\mathcal E^k} + c_{k,s} {\boldsymbol \iota}_{x}  {\boldsymbol \varepsilon}_{j}  + d_{k,s} {\boldsymbol \varepsilon}_{x} {\boldsymbol \iota}_{j}\Big)\otimes 
  \Big(  a_{\ell,t}  {\boldsymbol \iota}_{y}  {\boldsymbol \varepsilon}_{y} + b_{\ell,t} {\boldsymbol \varepsilon}_{y} {\boldsymbol \iota}_{y}\Big)  \Big({{\partial}\over{\partial {x_j}}}-{{\partial}\over{\partial {y_j}}}\Big)\omega(x,y) \\
  &-\sum_{j=1}^n      \Big(  a_{k,s}  {\boldsymbol \iota}_{x}  {\boldsymbol \varepsilon}_{x} + b_{k,s}  {\boldsymbol \varepsilon}_{x} {\boldsymbol \iota}_{x}\Big)
  \otimes 
     \Big(ty_j\Id_{\mathcal E^\ell} + c_{\ell,t} {\boldsymbol \iota}_{y}  {\boldsymbol \varepsilon}_{j}  + d_{\ell,t} {\boldsymbol \varepsilon}_{y} {\boldsymbol \iota}_{j}\Big)  \Big({{\partial}\over{\partial {x_j}}}- {{\partial}\over{\partial {y_j}}}\Big)\omega(x,y)\Big\}. \\
    \end{align*}
    This finishes the proof of the theorem.
    \end{proof}
 
  For $s\in\mathbb{C}$, let
 \begin{equation}\label{conv-op}
 J_s^k\omega(x):=\int_{\mathbb R^n}\mathcal R_s^k(x-y)\omega(y)dy,\quad \omega\in \mathcal S\mathcal E^k(\mathbb R^n).
 \end{equation}
 We may see $J_s^k$ as a convolution operator with the distribution $\mathcal R_s^k$,
  $$J_s^k \omega = \mathcal R_s^k\ast \omega.$$
 So \autoref{conv-op}  defines a meromorphic family of operators from $\mathcal S\mathcal E^k(\mathbb R^n)$ to $\mathcal S'\mathcal E^k(\mathbb R^n)$.

 Recall the following formulas for the Fourier transform :


\begin{alignat}{5}
\mathcal F\Big({{\partial \omega}\over{\partial {y_j}}} \Big) (x)&=& -\sqrt{-1}\, x_j \mathcal F(\omega)(x),  &\qquad &  \mathcal F(y_j \omega)(x) &=&- \sqrt{-1}\, {{\partial}\over{\partial {x_j}}} \mathcal F(\omega)(x) \label{TF1}\\
\mathcal F(\boldsymbol d\omega) (x) &=& -\sqrt{-1}\, \boldsymbol \varepsilon_x \mathcal F \omega (x),  &\qquad & \mathcal F ( \boldsymbol \varepsilon_y \omega)(x) &=&-\sqrt {-1} \, \boldsymbol d \mathcal F(\omega) (x)  \label{TF2}\\
\mathcal F (\boldsymbol \delta \omega) (x) &=&\sqrt{-1} \,\boldsymbol \iota_x \mathcal F(\omega)(x),  &\qquad &  \mathcal F ( \boldsymbol \iota_y \omega)(x) &=&\sqrt{-1}  \,\boldsymbol \delta  \mathcal F(\omega) (x). \label{TF3}
\end{alignat}

 For $s\in\mathbb{C}$, let

\begin{alignat}{5}
\alpha_{k,s}&=(s+n-2k)(s-n+2k-2)& &\qquad& \beta_{k,s}&=(s-n+2k)(s+n-2k-2)& \\
   \gamma_{k,s}&=2s(s-n+2k-2)& &\qquad& \delta_{k,s} &=2s(s+n-2k-2)& 
\end{alignat}
and
\begin{equation}
\kappa_{k,s}=(s-n+2k-2)(s+n-2k-2)
\end{equation}
 
\begin{theorem}    \label{main2} The following identity holds true
  $$ - \kappa_{k,s} \kappa_{\ell,t}\,\Vert x-y\Vert^2  \big(  J_{-s-n }^k\otimes J_{-t-n}^\ell \big)= \big( J_{-s-n+2}^k \otimes   J_{  -t-n+2}^\ell \big) \circ   E_{s,t}^{k,\ell}  ,$$ where $ E_{s,t}^{k,\ell}$ is the differential operator with polynomial coefficients in $x, y$ (and also in $s, t$) defined on $(k,\ell)$-differential forms by 
\begin{eqnarray}\label{estkl}
  E_{s,t}^{k,\ell}&=&  
    - \Big( \alpha_{k,s}  {\boldsymbol \delta}   {\boldsymbol d}   + \beta_{k,s}   {\boldsymbol d}  {\boldsymbol \delta} \Big) \otimes \Big(  \alpha_{\ell,t}  {\boldsymbol \delta}   {\boldsymbol d}  + \beta_{\ell,t}  {\boldsymbol d}  {\boldsymbol \delta}\Big)  \circ \Vert x-y\Vert^2 \nonumber\\
  &&-2\sum_{j=1}^n  
  \Big(s \kappa_{k,s} {\partial \over  {\partial {x_j }}}   - \gamma_{k,s} {\boldsymbol \delta}   {\boldsymbol \varepsilon}_{j}  +  \delta_{k,s} {\boldsymbol d}  {\boldsymbol \iota}_{j}\Big) \otimes 
  \Big( \alpha_{\ell,t}  {\boldsymbol \delta}   {\boldsymbol d}  + \beta_{\ell,t}  {\boldsymbol d}  {\boldsymbol \delta} \Big)  \circ (x_j -y_j) \nonumber \\
  &&+2\sum_{j=1}^n    
   \Big(  \alpha_{k,s}  {\boldsymbol \delta}  {\boldsymbol d}  + \beta_{k,s}  {\boldsymbol d}  {\boldsymbol \delta} \Big) 
  \otimes 
     \Big( t \kappa_{\ell,t}   {\partial \over  {\partial {y_j }}}   - \gamma_{\ell,t} {\boldsymbol \delta}   {\boldsymbol \varepsilon}_{j}  + \delta_{\ell,t} {\boldsymbol d}  {\boldsymbol \iota}_{j}\Big)  \circ (x_j - y_j )\nonumber \\
     &&  +{{2 } }   \sum_{j=1}^n \Big(s \kappa_{k,s}  {\partial \over  {\partial {x_j }}}   - \gamma_{k,s} {\boldsymbol \delta}   {\boldsymbol \varepsilon}_{j }  + \delta_{k,s} {\boldsymbol d}  {\boldsymbol \iota}_{j }\Big) \otimes \Big( t  \kappa_{\ell,t}  {\partial \over  {\partial {y_j }}}   - \gamma_{\ell,t} {\boldsymbol \delta}   {\boldsymbol \varepsilon}_{j } + \delta_{\ell,t} {\boldsymbol d}  {\boldsymbol \iota}_{j }\Big) \nonumber \\
  &&+   s(s+n) \kappa_{k,s} \Id_{\mathcal E^k}\kappa_{\ell,t} \otimes \Big(  \alpha_{\ell,t}  {\boldsymbol \delta}   {\boldsymbol d} + \beta_{\ell,t} {\boldsymbol d}  {\boldsymbol \delta} \Big)  +  t(t+n) \kappa_{\ell,t}  \Big(  \alpha_{k,s}  {\boldsymbol \delta}    {\boldsymbol d} + \beta_{k,s} {\boldsymbol d}  {\boldsymbol \delta} \Big) \otimes \Id_{\mathcal E^\ell} \nonumber\\
 \end{eqnarray} 
  \end{theorem}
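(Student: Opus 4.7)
The plan is to derive Theorem \ref{main2} from Theorem \ref{main1} via Euclidean Fourier transform. Since $\mathcal Z_s^k = \mathcal F(\mathcal R_{-s-n}^k)$ by \autoref{Z}, convolution with $\mathcal R_{-s-n}^k$ (the operator $J_{-s-n}^k$) is Fourier-conjugate to multiplication by $\mathcal Z_s^k$, while multiplication by $\|x-y\|^2$ corresponds, after Fourier transform, to the differential operator $-Q(\partial_u - \partial_v)$ in the dual variables $(u,v)$.

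\emph{Step 1 (reduction to an operator identity).} For $\eta \in \mathcal S\mathcal E^{k,\ell}(\mathbb R^n\times \mathbb R^n)$, write $\hat\eta = \mathcal F\eta$. Taking $\mathcal F$ (in both variables $x,y$) of the asserted identity of Theorem~\ref{main2} and invoking Theorem~\ref{main1} with $\hat\eta$ in place of $\omega$, the left-hand side becomes
$$\kappa_{k,s}\kappa_{\ell,t}\,Q\Big(\tfrac{\partial}{\partial u}-\tfrac{\partial}{\partial v}\Big)\bigl(\mathcal Z_s^k(u)\otimes\mathcal Z_t^\ell(v)\,\hat\eta(u,v)\bigr)=\kappa_{k,s}\kappa_{\ell,t}\,\mathcal Z_{s-2}^k(u)\otimes\mathcal Z_{t-2}^\ell(v)\,D_{s,t}^{k,\ell}\hat\eta(u,v),$$
whereas the right-hand side becomes $\mathcal Z_{s-2}^k(u)\otimes\mathcal Z_{t-2}^\ell(v)\,\mathcal F\bigl(E_{s,t}^{k,\ell}\eta\bigr)(u,v)$. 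Theorem~\ref{main2} is therefore equivalent to the operator identity
$$\mathcal F\bigl(E_{s,t}^{k,\ell}\eta\bigr)=\kappa_{k,s}\kappa_{\ell,t}\,D_{s,t}^{k,\ell}\hat\eta,$$
i.e.\ to showing $E_{s,t}^{k,\ell}=\kappa_{k,s}\kappa_{\ell,t}\,\mathcal F^{-1}\circ D_{s,t}^{k,\ell}\circ\mathcal F$ as differential operators.

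\emph{Step 2 (term-by-term computation).} From \autoref{TF1}--\autoref{TF3} I would extract the Fourier dictionary: the algebraic operators $\boldsymbol\varepsilon_j,\boldsymbol\iota_j$ commute with $\mathcal F$, while
$$\mathcal F^{-1}\!\circ u_j\circ \mathcal F=i\tfrac{\partial}{\partial x_j},\qquad \mathcal F^{-1}\!\circ\boldsymbol\varepsilon_u\circ \mathcal F=i\boldsymbol d,\qquad \mathcal F^{-1}\!\circ\boldsymbol\iota_u\circ \mathcal F=-i\boldsymbol\delta,$$
and analogously in the $v$ variable. Consequently $\boldsymbol\iota_u\boldsymbol\varepsilon_u$ becomes $\boldsymbol\delta\boldsymbol d$, $\boldsymbol\varepsilon_u\boldsymbol\iota_u$ becomes $\boldsymbol d\boldsymbol\delta$, and $Q(\partial_u-\partial_v)$ becomes $-\|x-y\|^2$. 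Substituting these into each of the summands of $D_{s,t}^{k,\ell}$ and absorbing the overall scalar $\kappa_{k,s}\kappa_{\ell,t}$ via the elementary identities
$$\kappa_{k,s}a_{k,s}=\alpha_{k,s},\quad \kappa_{k,s}b_{k,s}=\beta_{k,s},\quad \kappa_{k,s}c_{k,s}=\gamma_{k,s},\quad \kappa_{k,s}d_{k,s}=\delta_{k,s}$$
(which are immediate from \autoref{alphabeta} and \autoref{gammadelta}), and their analogues with $k,s$ replaced by $\ell,t$, yields precisely the expression for $E_{s,t}^{k,\ell}$ asserted in the theorem.

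The argument requires no input beyond Theorem~\ref{main1} and the Fourier formulas \autoref{TF1}--\autoref{TF3}. The only real difficulty is the bookkeeping: one must keep careful track of the factors of $i$ (and of the pairs $i^2=-1$) produced by each substitution $u_j\mapsto i\partial/\partial x_j$ and $\partial/\partial u_j\mapsto ix_j$, and verify that the mixed-derivative terms $(\partial/\partial u_j-\partial/\partial v_j)$ in $D_{s,t}^{k,\ell}$ transform with the correct sign into the corresponding multiplication terms $(x_j - y_j)$ in $E_{s,t}^{k,\ell}$.
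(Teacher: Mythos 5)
Your proposal is correct and is exactly the argument the paper intends: the paper's own ``proof'' consists of the single remark that Theorem \ref{main2} is the Fourier transform of Theorem \ref{main1} via \autoref{Z} and \autoref{TF1}--\autoref{TF3}, with all details omitted. Your dictionary ($\boldsymbol\iota_u\boldsymbol\varepsilon_u\mapsto\boldsymbol\delta\boldsymbol d$, $Q(\partial_u-\partial_v)\mapsto-\Vert x-y\Vert^2$, etc.) and the scalar identities $\kappa_{k,s}a_{k,s}=\alpha_{k,s}$, $\kappa_{k,s}c_{k,s}=\gamma_{k,s}$, \dots{} check out, so you have supplied precisely the omitted bookkeeping.
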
 
  
This is merely the Fourier transform version of  Theorem \ref{main1}, using \autoref{Z} and formulas   \autoref{TF1},  \autoref{TF2} and  \autoref{TF3}. We omit details.  \\

Next, we need to rewrite $  E_{s,t}^{k,\ell}$    in its  normal form (i.e. multiplications \emph{after} differentiations). Before stating the result,  let us introduce for $0\leq k\leq n$, $1\leq j \leq n$ the following differential operators:
\begin{eqnarray*}
 \Box_{k,s} \;\, &:= & \alpha_{k,s}  {\boldsymbol \delta}   {\boldsymbol d}   + \beta_{k,s}   {\boldsymbol d}  {\boldsymbol \delta},\\
 \nabla_{k,s,j} & := &   \Big(2\alpha_{k,s} {\partial\over {\partial x_j}}  -4(n-2k)    {\boldsymbol \varepsilon}_{j } {\boldsymbol \delta}  +4(n-2k)  {\boldsymbol d} {\boldsymbol \iota}_{j } \Big) - \Big(  s  \alpha_{k,s}  {\partial\over {\partial x_j}} + \gamma_{k,s}   {\boldsymbol \varepsilon}_{j} {\boldsymbol \delta}   + \delta_{k,s} {\boldsymbol d}  {\boldsymbol \iota}_{j}\Big)\\
 &=& (2-s) \alpha_{k,s}  {\partial\over {\partial x_j}} - ( 4(n-2k)  +\gamma_{k,s})   {\boldsymbol \varepsilon}_{j} {\boldsymbol \delta} + ( 4(n-2k)  -\delta_{k,s}) {\boldsymbol d}  {\boldsymbol \iota}_{j} \\
 &=&(2-s)\Big[(s+n-2k)(s-n+2k-2) {\partial\over {\partial x_j}}    +2(s-n+2k)  {\boldsymbol \varepsilon}_{j} {\boldsymbol \delta}  +2(s+n-2k) {\boldsymbol d}  {\boldsymbol \iota}_{j}\Big],
\end{eqnarray*}
where the coefficients $\alpha_{k,s},$ $\beta_{k,s}$, $\gamma_{k,s}$ and $\delta_{k,s}$  are given by  \autoref{alphabeta} and  \autoref{gammadelta}. Similarly, we introduce the operators $\boxvoid_{\ell,{t}}$  and 
 $ \nabla_{\ell,{t},j}$ with respect to the $y$-variable.

\begin{theorem}\label{normal} 
The operator $ E_{s,t}^{k,\ell}$ in Theorem \ref{main2} can be rewritten in the following normal form: 
\begin{eqnarray}\label{estkl2}
  E_{s,t}^{k,\ell}&=&    -\Vert x-y\Vert^2      \boxvoid_{k,s}\otimes \boxvoid_{\ell,t} \nonumber\\ 
  &&+ 2 \sum_{j=1}^n (x_j-y_j)   \left\{ \nabla_{k,s,j}   \otimes \boxvoid_{\ell,t}
  -   \boxvoid_{k,s} \otimes   \nabla_{\ell,t,j}  \right\} \nonumber\\
  &&+ 2 \sum_{j=1}^n    \nabla_{k,s,j}   \otimes   \nabla_{\ell,t,j}\nonumber \\
  &&+  (t-2)(t-n-2)(t-n+2\ell)(t+n-2\ell)     \boxvoid_{k,s} \otimes  \Id_{{\mathcal E}^\ell}\nonumber\\
&&  +  (s-2)(s-n-2)(s-n+2k)(s+n-2k)       \Id_{{\mathcal E}^k} \otimes \boxvoid_{\ell,t}.
 \end{eqnarray}

\end{theorem}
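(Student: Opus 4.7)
The statement is a purely algebraic normal-ordering of the expression for $E_{s,t}^{k,\ell}$ furnished by Theorem \ref{main2}. The strategy is to commute every occurrence of the multiplication operators $\Vert x-y\Vert^2$ and $(x_j-y_j)$ to the \emph{left} of all differential operators appearing in \autoref{estkl}. Since $\boldsymbol d$ and $\boldsymbol\delta$ are first-order, the required commutators follow elementarily from Section 2: $[\boldsymbol d,x_j]=\boldsymbol\varepsilon_j$, $[\boldsymbol\delta,x_j]=-\boldsymbol\iota_j$, $[\partial_{x_j},x_i]=\delta_{ij}$, $[\partial_{x_j},y_i]=0$. Using \autoref{tikk1} together with the identity $\beta_{k,s}-\alpha_{k,s}=4(n-2k)$, one obtains the core formula
\begin{equation*}
[\Box_{k,s},\, x_j] \;=\; -2\alpha_{k,s}\,\partial_{x_j} \,+\, 4(n-2k)\bigl(\boldsymbol\varepsilon_j\boldsymbol\delta - \boldsymbol d\boldsymbol\iota_j\bigr),
\end{equation*}
and an analogous formula for the first-order operator $\Xi^x_{k,s,j} := s\kappa_{k,s}\partial_{x_j} - \gamma_{k,s}\boldsymbol\delta\boldsymbol\varepsilon_j + \delta_{k,s}\boldsymbol d\boldsymbol\iota_j$ that appears on lines 2--3 of \autoref{estkl}. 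A second iteration gives $[\Box_{k,s},\Vert x-y\Vert^2]$ directly, with a first-order piece $2\sum_j(x_j-y_j)[\Box_{k,s},x_j]$ and a zero-order piece arising from $[[\Box_{k,s},x_j],x_j]$.

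Armed with these identities, one proceeds line by line through \autoref{estkl}. The leading term $-\Box_{k,s}\otimes\Box_{\ell,t}\circ\Vert x-y\Vert^2$ expands by Leibniz into: (i) the desired $-\Vert x-y\Vert^2\,\Box_{k,s}\otimes\Box_{\ell,t}$; (ii) cross-terms $-2(x_j-y_j)[\Box_{k,s},x_j]\otimes\Box_{\ell,t}$ and its $y$-analogue; (iii) a pure second-order mixed piece $4\sum_j[\Box_{k,s},x_j]\otimes[\Box_{\ell,t},y_j]$; and (iv) zero-order ``trace'' contributions involving $\boldsymbol\varepsilon_j\boldsymbol\iota_j$ on each tensor factor. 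The terms on lines 2 and 3 of \autoref{estkl}, of the shape $\pm2\Xi^x_{k,s,j}\otimes\Box_{\ell,t}\circ(x_j-y_j)$ and its $y$-counterpart, become $\pm2(x_j-y_j)\Xi^x_{k,s,j}\otimes\Box_{\ell,t}$ plus zero-order corrections from $[\Xi^x_{k,s,j},x_j]$. The fourth and fifth lines of \autoref{estkl} are already in normal form and pass through unchanged.

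Collecting all contributions, the coefficient of $(x_j-y_j)$ on the $x$-side is $\bigl([\Box_{k,s},x_j]-2\Xi^x_{k,s,j}\bigr)\otimes\Box_{\ell,t}$; an algebraic check using the polynomial identities $4(n-2k)+\gamma_{k,s} = 2(s-2)(s-n+2k)$ and $4(n-2k)-\delta_{k,s} = -2(s-2)(s+n-2k)$ shows that this equals $2\,\nabla_{k,s,j}\otimes\Box_{\ell,t}$, matching the second line of \autoref{estkl2}. The symmetric calculation on the $y$-side produces $-\Box_{k,s}\otimes\nabla_{\ell,t,j}$, and gathering the pure second-order pieces with the fourth line of \autoref{estkl} (already of the form $\Xi^x\otimes\Xi^y$) yields exactly $2\sum_j\nabla_{k,s,j}\otimes\nabla_{\ell,t,j}$. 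Finally, the zero-order coefficients of $\Id_{\mathcal E^k}\otimes\Box_{\ell,t}$ and $\Box_{k,s}\otimes\Id_{\mathcal E^\ell}$ combine the initial constants $s(s+n)\kappa_{k,s}\kappa_{\ell,t}$ and $t(t+n)\kappa_{k,s}\kappa_{\ell,t}$ from line 5 of \autoref{estkl} with the double-commutator contributions generated in step (iv) above; a last polynomial simplification collapses each of them into the quartic factors $(s-2)(s-n-2)(s-n+2k)(s+n-2k)$ and $(t-2)(t-n-2)(t-n+2\ell)(t+n-2\ell)$ displayed in \autoref{estkl2}.

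The argument is thus conceptually routine but computationally lengthy; the \textbf{main obstacle} is careful bookkeeping, since each of the five lines of \autoref{estkl} distributes across several lines of \autoref{estkl2} and everything must be regrouped according to polynomial support in $(x-y)$. No identity beyond those recorded in Section 2 and standard polynomial manipulations among $\alpha_{k,s},\beta_{k,s},\gamma_{k,s},\delta_{k,s},\kappa_{k,s}$ is needed.
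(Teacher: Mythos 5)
Your proposal is correct and follows essentially the same route as the paper: the commutator identities you derive for $[\boldsymbol d,x_j]$, $[\boldsymbol\delta,x_j]$, $[\Box_{k,s},x_j]$ and $[\Box_{k,s},\Vert x-y\Vert^2]$ are exactly the content of the paper's Lemma \ref{ident_x}, and the subsequent line-by-line normal ordering and regrouping, including the polynomial identities $\beta_{k,s}-\alpha_{k,s}=4(n-2k)$, $4(n-2k)+\gamma_{k,s}=2(s-2)(s-n+2k)$ and $4(n-2k)-\delta_{k,s}=-2(s-2)(s+n-2k)$, is precisely how the paper assembles \autoref{estkl2}. The only cosmetic difference is that the paper also invokes \autoref{tikk1} to flip $\boldsymbol\delta\boldsymbol\varepsilon_j$ into $\boldsymbol\varepsilon_j\boldsymbol\delta$ in the fourth term, a step you use implicitly.
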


The proof is straightforward, but long and tedious. We first need some elementary formulas .

\begin{lemma}\label{ident_x}  Let $\omega$ be a $k$-form. Then, for fixed $y\in\mathbb{R}^n$,
\begin{eqnarray*}  
\cdd\dd  (x_j-y_j)\omega  &=& \displaystyle(x_j-y_j)\cdd \dd\omega   -2  {\partial \over  {\partial {x_j }}}\omega -  \de_j \cdd\omega +\dd \di_j\omega  \\
\dd\cdd  (x_j-y_j)\omega &=& \displaystyle (x_j-y_j)\dd \cdd\omega+ \de_j \cdd\omega   -\dd \di_j \omega\\
\cdd \dd \Vert x-y\Vert^2\omega  &=& \displaystyle  \Vert x-y\Vert^2 \cdd \dd \omega
+2\sum_{j=1}^n(x_j-y_j)( -2 {\partial \over  {\partial {x_j }}} -  \de_j  \cdd +\dd \di_j    )\omega  -2(n-k)\omega \\
\dd \cdd    \Vert x-y\Vert^2\omega &=&\displaystyle \Vert x-y\Vert^2 \dd \cdd\omega +2\sum_{j=1}^n(x_j-y_j)(\de_j \cdd   - \dd \di_j )\omega  -2k \omega.
\end{eqnarray*}
 Derivations are taken with respect to the $x$-variable.

\end{lemma}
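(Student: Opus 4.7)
The plan is to derive all four identities from two basic Leibniz rules for $\dd$ and $\cdd$ against a scalar factor, namely
\[
\dd(f\omega)=f\,\dd\omega+\sum_m(\partial_m f)\,\de_m\omega,\qquad
\cdd(f\omega)=f\,\cdd\omega-\sum_m(\partial_m f)\,\di_m\omega,
\]
which follow at once from $\dd=\sum_m\de_m\partial_m$ and $\cdd=-\sum_m\di_m\partial_m$. Iterating these, all multiplications by $x_j-y_j$ or $\|x-y\|^2$ can be pushed to the outside; the commutators produced along the way will then be simplified by means of the anticommutation rules \autoref{ID} and \autoref{tikk1}, namely $\de_j\di_m+\di_m\de_j=\delta_{jm}\,\Id$ and $\dd\di_j+\di_j\dd=\partial_j$, $\cdd\de_j+\de_j\cdd=-\partial_j$.

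For the first two identities I take $f=x_j-y_j$, so that $\partial_m f=\delta_{mj}$. A direct computation then gives
\[
\dd((x_j-y_j)\omega)=(x_j-y_j)\,\dd\omega+\de_j\omega,\qquad
\cdd((x_j-y_j)\omega)=(x_j-y_j)\,\cdd\omega-\di_j\omega,
\]
and applying $\cdd$ (resp.\ $\dd$) once more, using the first Leibniz rule again, produces
\[
\cdd\dd((x_j-y_j)\omega)=(x_j-y_j)\cdd\dd\omega+\cdd\de_j\omega-\di_j\dd\omega,
\]
and analogously for $\dd\cdd$. The first identity is then obtained by substituting $\cdd\de_j=-\partial_j-\de_j\cdd$ and $\di_j\dd=\partial_j-\dd\di_j$; the second identity follows by the same rewriting with $\dd\di_j=\partial_j-\di_j\dd$ and $\de_j\cdd=-\partial_j-\cdd\de_j$ (the $\partial_j$-terms cancel or double depending on signs, yielding $-2\partial_j\omega$ in the first case and no $\partial_j$-term in the second).

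For the last two identities I take $f=\|x-y\|^2$, so that $\partial_m f=2(x_m-y_m)$. Applying the Leibniz rules twice gives, after collecting the pure $\|x-y\|^2$-term,
\[
\cdd\dd(\|x-y\|^2\omega)=\|x-y\|^2\cdd\dd\omega-2\!\sum_m\di_m\de_m\omega
+2\sum_m(x_m-y_m)\bigl(\cdd\de_m-\di_m\dd\bigr)\omega,
\]
and similarly for $\dd\cdd$. Two further steps then finish the proof: using $\cdd\de_m=-\partial_m-\de_m\cdd$ and $\di_m\dd=\partial_m-\dd\di_m$ converts the $(x_m-y_m)$-sum into exactly the expression $-2\partial_m-\de_m\cdd+\dd\di_m$ stated in the third identity; the constant term $-2(n-k)\omega$ is produced by the preceding lemma, which asserts $\sum_m\di_m\de_m=(n-k)\Id_\Lambda$ on $\Lambda^k$. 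The same computation for $\dd\cdd$, using $\sum_m\de_m\di_m=k\Id_\Lambda$, yields the fourth identity.

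I expect no genuine obstacle: the argument is entirely bookkeeping with the three anticommutation relations and the two trace identities $\sum_j\de_j\di_j=k$, $\sum_j\di_j\de_j=n-k$. The only real care needed is in tracking signs when sliding $\di_m$ past $\de_m$ inside the double sum $\sum_{m,\ell}(x_\ell-y_\ell)\di_\ell\de_m\partial_m\omega$ that arises in the $\|x-y\|^2$-cases; here one writes $\di_\ell\de_m=\delta_{\ell m}-\de_m\di_\ell$ (from \autoref{ID}) and notices that the resulting $\sum_m(x_m-y_m)\partial_m\omega$ terms combine with those coming from $\di_m\dd=\partial_m-\dd\di_m$ to leave the clean form in the statement.
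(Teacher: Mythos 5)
Your proof is correct and follows essentially the same route as the paper: the paper's own proof consists precisely of stating the two Leibniz rules for $\dd$ and $\cdd$ against multiplication by $x_j-y_j$ and $\Vert x-y\Vert^2$ and declaring the lemma a direct consequence, while you supply the remaining bookkeeping via the anticommutation relations \autoref{tikk1}, \autoref{ID} and the trace identities $\sum_j\de_j\di_j=k\Id$, $\sum_j\di_j\de_j=(n-k)\Id$. All four identities check out as you describe (note only that for the second identity the raw expansion $(x_j-y_j)\dd\cdd\omega+\de_j\cdd\omega-\dd\di_j\omega$ already is the stated form, so no further rewriting is needed there).
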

 \begin{proof} This is a direct consequence of the identities
$$\begin{cases} 
\dd (x_j-y_j)\omega  =   \de_j\omega  +(x_j-y_j)\dd\omega \\
 \cdd (x_j-y_j)\omega  =   -\di_j\omega  +(x_j-y_j)\cdd\omega,
 \end{cases}
$$
  and 
  $$\begin{cases} 
\dd  \Vert x-y\Vert^2\omega  =  \Vert x-y\Vert^2\dd\omega+ 2\sum_{j=1}^n(x_j-y_j)\de_j\omega     \\
 \cdd  \Vert x-y\Vert^2\omega  =    \Vert x-y\Vert ^2\cdd\omega -2\sum_{j=1}^n(x_j-y_j)\di_j\omega.
 \end{cases}
$$
 
\end{proof}

We may now start the proof of Theorem \ref{normal}. In the light  of Lemma \ref{ident_x} (and its  version  with respect to $y$) together with the anti-commutator laws  in  \autoref{tikk1}   we arrive at the expressions below for the first three terms   of the operator 
$ E_{s,t}^{k,\ell}$ in Theorem \ref{main2}. The first  term  of the operator $  E_{s,t}^{k,\ell}$ in \autoref{estkl}   can be rewritten as: 
\begin{eqnarray*}
&& - \Vert x-y\Vert^2      \Big(  \alpha_{k,s}  {\boldsymbol \delta}   {\boldsymbol d}   + \beta_{k,s}   {\boldsymbol d}  {\boldsymbol \delta} \Big) \otimes \Big(   \alpha_{\ell,t}  {\boldsymbol \delta}   {\boldsymbol d}  + \beta_{\ell,t} {\boldsymbol d}  {\boldsymbol \delta}\Big)   \\
&&+4\sum_{j=1}^n (x_j-y_j) \Big(-2(n-2k)   ({\boldsymbol \varepsilon}_{j } {\boldsymbol \delta} -{\boldsymbol d} {\boldsymbol \iota}_{j } )+  \alpha_{k,s}   {\partial \over  {\partial {x_j }}}  \Big) \otimes 
\Big(  \alpha_{\ell,t}  {\boldsymbol \delta}   {\boldsymbol d}  + \beta_{\ell,t} {\boldsymbol d}  {\boldsymbol \delta}\Big) \\
&&-4 \sum_{j=1}^n (x_j-y_j) \Big(  \alpha_{k,s}  {\boldsymbol \delta}   {\boldsymbol d}  + \beta_{k,s} {\boldsymbol d}  {\boldsymbol \delta}\Big)  \otimes 
\Big(-2(n-2\ell)    ({\boldsymbol \varepsilon}_{j } {\boldsymbol \delta} -{\boldsymbol d} {\boldsymbol \iota}_{j } )+ \alpha_{\ell,t}   {\partial \over  {\partial {y_j }}}  \Big) \\
&&+ 8\sum_{j=1}^n \Big(-2(n-2k)   ({\boldsymbol \varepsilon}_{j } {\boldsymbol \delta} -{\boldsymbol d} {\boldsymbol \iota}_{j } )+  \alpha_{k,s}  {\partial \over  {\partial {x_j }}}  \Big) 
\otimes 
\Big(-2(n-2\ell)   ({\boldsymbol \varepsilon}_{j } {\boldsymbol \delta} -{\boldsymbol d} {\boldsymbol \iota}_{j } )+ \alpha_{\ell,t}   {\partial \over  {\partial {y_j }}}  \Big) \\
&&+2((n-\ell)  \alpha_{\ell,t}  +\ell \beta_{\ell,t} ) \Big(  \alpha_{k,s}  {\boldsymbol \delta}   {\boldsymbol d}  + \beta_{k,s} {\boldsymbol d}  {\boldsymbol \delta}\Big)  \otimes \Id_{\mathcal E^\ell}  \\
&&+2((n-k)  \alpha_{k,s}  +k \beta_{k,s} )  \Id_{\mathcal E^k}  \otimes  \Big(  \alpha_{\ell,t} {\boldsymbol \delta}   {\boldsymbol d}  + \beta_{\ell,t} {\boldsymbol d}  {\boldsymbol \delta}\Big)  .
\end{eqnarray*}
The second term of the operator $ E_{s,t}^{k,\ell}$ in \autoref{estkl}  can be rewritten as:
\begin{eqnarray*}
&&  - {2 }  \sum_{j=1}^n (x_j -y_j)  
  \Big(s  \alpha_{k,s}  {\partial \over  {\partial {x_j }}}   + \gamma_{k,s}   {\boldsymbol \varepsilon}_{j} {\boldsymbol \delta}   + \delta_{k,s} {\boldsymbol d}  {\boldsymbol \iota}_{j}\Big) \otimes 
  \Big(  \alpha_{\ell,t}  {\boldsymbol \delta}   {\boldsymbol d}  + \beta_{\ell,t} {\boldsymbol d}  {\boldsymbol \delta} \Big)      \\
&&-4\sum_{j=1}^n  
\Big(s\alpha_{k,s}   {\partial \over  {\partial {x_j }}}   + \gamma_{k,s}   {\boldsymbol \varepsilon}_{j} {\boldsymbol \delta}   + \delta_{k,s} 
{\boldsymbol d}  {\boldsymbol \iota}_{j}\Big) \otimes \Big(-2(n-2\ell)   ( {\boldsymbol \varepsilon}_{j} {\boldsymbol \delta}  
 - {\boldsymbol d}  {\boldsymbol \iota}_{j})+  \alpha_{\ell,t}  {\partial \over  {\partial {y_j }}}  \Big) \\
 &&-   {2 } \big(ns \kappa_{k,s} + (n-k)\gamma_{k,s}+ k\delta_{k,s}\big)\Id_{\mathcal E} \otimes 
 \Big(  \alpha_{\ell,t }{\boldsymbol \delta}   {\boldsymbol d}  + \beta_{\ell,t}  {\boldsymbol d}  {\boldsymbol \delta} \Big).
\end{eqnarray*}
Finally, the third   term of the operator $ E_{s,t}^{k,\ell}$ in \autoref{estkl}  can be rewritten as:
\begin{eqnarray*} 
&&2 \sum_{j=1}^n  (x_j - y_j )    \Big(  \alpha_{k,s}  {\boldsymbol \delta}  {\boldsymbol d}  +  \beta_{k,s}  {\boldsymbol d}  {\boldsymbol \delta} \Big) 
  \otimes 
     \Big(t \alpha_{\ell,t}   {\partial \over  {\partial {y_j }}}  + \gamma_{\ell,t}  {\boldsymbol \varepsilon}_{j} {\boldsymbol \delta}    + \delta_{\ell,t} {\boldsymbol d}  {\boldsymbol \iota}_{j}\Big)   \\
     &&-4\sum_{j=1}^n  
      \Big( -2(n-2k)    ( {\boldsymbol \varepsilon}_{j} {\boldsymbol \delta}  
 - {\boldsymbol d}  {\boldsymbol \iota}_{j})+  \alpha_{k,s}   {\partial \over  {\partial {x_j }}}  \Big) 
\otimes  \Big(t \alpha_{\ell,t}  {\partial \over  {\partial {y_j }}}   + \gamma_{\ell,t}   {\boldsymbol \varepsilon}_{j} {\boldsymbol \delta}   + \delta_{\ell,t} 
{\boldsymbol d}  {\boldsymbol \iota}_{j}\Big)  \\
&&- 2\big( nt \kappa_{\ell,t}+  (n-\ell) \gamma_{\ell,t}+ \ell \delta_{\ell,t} \big) 
 \Big(  \alpha_{k,s}  {\boldsymbol \delta}   {\boldsymbol d}  + \beta_{k,s}  {\boldsymbol d}  {\boldsymbol \delta} \Big)  \otimes \Id_{\mathcal E^\ell}.  
 \end{eqnarray*} 

It is worthwhile noting   that by the   anti-commutator laws  in  \autoref{tikk1}  we may rewrite the fourth term  of the operator $ E_{s,t}^{k,\ell}$  in \autoref{estkl}  as: 
$$ {{2 } }   \sum_{j=1}^n \Big( s  \alpha_{k,s} {\partial \over  {\partial {x_j }}}   + \gamma_{k,s}    {\boldsymbol \varepsilon}_{j }  {\boldsymbol \delta}+ \delta_{k,s}  {\boldsymbol d}  {\boldsymbol \iota}_{j } \Big) \otimes \Big( t\alpha_{\ell, t}  {\partial \over  {\partial {y_j }}}   + \gamma_{\ell,t}   {\boldsymbol \varepsilon}_{j } {\boldsymbol \delta}  + \delta_{\ell,t} {\boldsymbol d}  {\boldsymbol \iota}_{j }\Big) .$$

 It remains to sum up all terms to finish the poof of Theorem \ref{normal}

We close this section by writing the operator $E_{s,t}^{k,\ell}$ in  the particular case   $k=\ell=0.$ Here the operator $E_{s,t}^{0,0}$ will act on the space $\mathcal S\mathcal E^{0,0}(\mathbb R^n\times \mathbb R^n)= \mathcal S (\mathbb R^n\times \mathbb R^n).$  Since $ {\boldsymbol \delta}  =0$   and ${\boldsymbol \iota}_{j }=0$   on scalar functions, the operators $\boxvoid_{0,s} $ and $\nabla_{0,s,j}$ reduce to 
   $$\boxvoid_{0,s}  =   \alpha_{0,s}  {\boldsymbol \delta}   {\boldsymbol d}  =-\alpha_{0,s} 
Q\big({\partial \over {\partial   x}}\big)  ,$$ (see \autoref{hodge}), and $$\nabla_{0,s,j}  =(2-s) \alpha_{0,s} {{\partial  }\over {\partial x_j}}.$$  Similar identities hold with respect to $y$.  Hence
\begin{eqnarray*}
  E_{s,t}^{0,0}&=& 
  \alpha_{0,s} \alpha_{0,t}  \Big\{-\Vert x-y\Vert^2 Q\big({\partial \over{\partial x}}\big)\otimes Q\big({\partial \over{\partial y}}\big) - 2  \sum_{j=1}^n (t-2) (x_j-y_j) Q\big({\partial \over{\partial x}}\big)\otimes {\partial \over{\partial y_j}}\\
&& + 2  \sum_{j=1}^n ( s-2) (x_j-y_j)  {\partial \over{\partial x_j}} \otimes Q\big({\partial \over{\partial y}}\big)-  (t-2)(t-n)Q\big({\partial \over{\partial x}}\big) \otimes  \Id_{\mathcal E} \\
&& + 2   \sum_{j=1}^n ( s-2)( t-2) {\partial \over{\partial x_j}}\otimes {\partial \over{\partial y_j}} -   (s-2)(s-n) \Id_{\mathcal E} \otimes Q\big({\partial \over{\partial y}}\big)    \Big\}\\
&=& 
  (s+n )(s-n-2) (t+n )(t-n-2) \times\\
 &&\Big\{-\Vert x-y\Vert^2 Q\big({\partial \over{\partial x}}\big)\otimes Q\big({\partial \over{\partial y}}\big) 
 - 2  \sum_{j=1}^n (t-2) (x_j-y_j) Q\big({\partial \over{\partial x}}\big)\otimes {\partial \over{\partial y_j}}\\
&&  
+ 2  \sum_{j=1}^n ( s-2) (x_j-y_j) {\partial \over{\partial x_j}} \otimes Q \big({\partial \over{\partial y}}\big)
-  (t-2)(t-n) Q\big({\partial \over{\partial x}}\big) \otimes  \Id_{\mathcal E} \\
&&+ 2   \sum_{j=1}^n ( s-2)( t-2)  {\partial \over{\partial x_j}}\otimes {\partial \over{\partial y_j}}-   (s-2)(s-n) \Id_{\mathcal E} \otimes Q\big({\partial \over{\partial y}}\big)   
 \Big\}.
\end{eqnarray*}
Up to the normalization constant    $ (s+n )(s-n-2) (t+n )(t-n-2)$ and the change of variables $s$ by $2s$ and $t$ by $2t,$ the operator $E_{s,t}^{0,0}$ coincides  with the differential operator obtained in \cite[Proposition 10.3]{bck} to build   covariant bi-differential operators under the (diagonal) action of  the Lie group ${\rm O}(n+1,1).$


\section{Background on the   conformal group of $\mathbb{R}^n$}
Let  $ \mathbb R^{1,n+1}$  be the $n+2$-dimensional real vector space equipped with the Lorentzian quadratic form $$[{\bf x},{\bf x}] = x_0^2-(x_1^2+\dots+x_{n+1}^2),\qquad {\bf x}=(x_0,x_1,\ldots, x_{n+1}).$$
Let $\Xi$ be the isotropic cone defined by 
$$\Xi = \{ {\bf x}\in \mathbb R^{1,n+1}\setminus \{0\}\;:\;  [{\bf x},{\bf x}] = 0\}.$$
For ${\bf x}\in \mathbb R^{1,n+1}\setminus\{0\}$, denote by $[{\bf x}] = \mathbb R^* {\bf x}$ the ray through ${\bf x}$ and consider  the space of isotropic rays, i.e. the quotient space $\Xi/\mathbb R^*$. 

The subspace $\{ {\bf x}\in \mathbb R^{1,n+1}\;:\; x_0=0\}$ will be identified with $\mathbb R^{n+1}$ under the isomorphism
$$\mathbb R^{n+1} \ni x' \longmapsto (0, x')\in \mathbb R^{1,n+1} .$$
Denote by  $S^n$   the unit sphere of $\mathbb R^{n+1}.$ The map
$$S^n\ni x'\longmapsto \mathbb{R}^*(1,x')$$
yields an isomorphism of $S^n$ with $\Xi/\mathbb R^*$; the inverse isomorphism being described by
$$\Xi/\mathbb R^*\ni\mathbb R^*{\bf x} \longmapsto \mathbb R^*{\bf x} \cap \{ x_0 = 1\}.$$

Let $G={\rm SO}_0(1,n+1)$ be the connected component of the identity in the group of isometries for the Lorentzian form on $\mathbb R^{1,n+1}$. Then $G$ acts on $\Xi$ and commutes with  the action of $\mathbb R^*$ on $\mathbb R^{1,n+1}$, so that $G$ acts on $\Xi/\mathbb R^*$    and yielding an action of $G$ on $S^n.$ 

Let us give more details on the action of $G $ on the unit sphere $S^n.$ For $x'=(x'_1, \ldots, x'_{n+1})\in S^n$ and $g\in G,$ observe that $(g(1,x'))_0>0$ and define $g(x')\in S^n$ by 
$$(1, g(x'))= (g(1,x'))_0^{-1}\, g(1,x').$$ 
 For $g\in G$ and $x'\in S^n,$ set 
\begin{equation}\label{ccc} 
c(g,x') =(g(1,x'))_0^{-1}.  
\end{equation}
 Clearly $ c(g,x')$ is a smooth and strictly positive function on $G\times S^n.$ Moreover,  the function $c$ satisfies the cocycle property $$ c(g_1g_2,x') =c\big(g_1,g_2(x')\big) c(g_2,x'),\qquad 
 g_1,g_2\in G, \; x'\in S^n.$$ 
This action turns out to be \emph{conformal} on $S^n$, i.e. for any $g\in G$,   $x'\in S^n$ and arbitrary $\xi\in T_xS^n$, the differential $Dg(x')$ satisfies 
$$\Vert Dg(x') \xi \Vert = c(g,x') \Vert \xi\Vert,$$
and the term  $c(g,x')$ is called the {\em conformal factor} of $g$ at $x'.$

Let $e_{n+1} = (0,0,\dots, 0, 1)$, and let $$\kappa : \mathbb R^n\longrightarrow S^n \smallsetminus \{-e_{n+1}\}$$ 
defined by $$ (x_1,\dots, x_n) \longmapsto \left(\frac{2x_1}{1+\vert x\vert^2}\,,\cdots, \frac{2x_n}{1+\vert x\vert^2}, \frac{1-\vert x\vert^2}{1+\vert x\vert^2} \right)
$$
be the inverse map of the stereographic projection. The action of $G$ on $S^n$ can be transferred to a  rational   action (not everywhere defined) on $\mathbb R^n$, for which we still use the notation $G\times \mathbb R^n\ni(g,x) \longmapsto g(x)\in \mathbb R^n$.

The map $\kappa$ is   conformal  and hence, the rational action of $G$ on $\mathbb R^n$ transferred from its action on $S^n$ is conformal. For $g\in G$ defined at $x\in \mathbb R^n$, denote by $  \Omega(g,x)$ the corresponding conformal factor.  Below,  among  other things, we will find an expression for $  \Omega(g,x).$

Choose $e_{n+1} = (0,0,\dots, 0, 1)$ as origin on the sphere $S^n$, and let $[(1,e_{n+1})]$ be the corresponding isotropic ray. The stabilizer of $[(1,e_{n+1})]$ is a parabolic subgroup $P$ of $G$, which has the Langlands decomposition $P=MAN$ with 
$$\begin{array}{ll}
 M &=\left  \{\begin{pmatrix} 1&&\\&m&\\&&1\end{pmatrix} \;:\; m\in SO(n)\right\},\\
 &\\
A &= \left\{ a_t := \begin{pmatrix}\cosh t &&\sinh t\\ \\&\id_n&\\ \\\sinh t& &\cosh t\end{pmatrix}\;:\;  t\in \mathbb R\right\},\\
&\\
N &= \left\{n_x := \begin{pmatrix} 1+\frac{1}{2} \Vert x\Vert^2&x^t&-\frac{1}{2}\Vert x\Vert^2\\ \\x&\id_n&-x\\ \\\frac{1}{2}\Vert x\Vert^2&x^t&1-\frac{1}{2} \Vert x\Vert^2\end{pmatrix}\;:\;x\in \mathbb R^n\right\}.
\end{array}
$$
 Denote by $\overline N$  the opposite nilpotent subgroup,
$$\overline N = \left\{\overline n_x := \begin{pmatrix} 1+\frac{1}{2} \Vert x\Vert^2&x^t&\frac{1}{2}\Vert x\Vert^2\\ \\x&\id_n&x\\ \\-\frac{1}{2}\Vert x\Vert^2&-x^t&1-\frac{1}{2} \Vert x\Vert^2\end{pmatrix}\;:\; x\in \mathbb R^n\right\}.$$

The origin $e_{n+1}$ on the sphere $S^n$ corresponds to the point ${\bf 0}=(0,0,\dots, 0)$ in $\mathbb R^n$, and hence the parabolic subgroup $P$ is the stabilizer of $\bf 0$. The group $M\simeq {\rm SO}(n)$ acts on $\mathbb R^n$ by its natural action   and $A$ acts on $\mathbb R^n$ by  $$a_t(x) = e^{-t} x,\qquad a_t\in A.$$ 
The group $\overline N$ acts on $\mathbb R^n$ by translations, 
$$\overline n_y(§x) = x+y,\qquad \overline n_y\in \overline N.$$
The explicit action of $N$ on $\mathbb R^n$ (which is rational) will not be needed, but it is easily verified that
$$Dn_y({\bf 0}) = \Id_n,\qquad n_y\in N.$$

Up to a closed subset of null Haar measure, the group $G$ is equal to $\overline N\times M\times A\times N$. The corresponding decomposition of $g\in G$ is   $$g = \overline n(g) m(g) a_{t(g)} n(g).$$
An elementary computation gives 
\begin{equation}\label{ili} Dg({\bf 0}) = \Ad m(g)({\bf 0})_{\vert \overline{\mathfrak n}}\circ \Ad a_t(g)_{\vert \overline{\mathfrak n}}= e^{-t(g)}\, m(g),\end{equation}
where $\overline{\mathfrak{n}}=\mathrm{Lie}(\overline{N})$.  

Now let $x\in \mathbb R^n$ and let $g\in G$ be defined at $x$. Since $g(x)  = g\overline n_x({\bf 0})$, it follows from \autoref{ili} that 
\begin{equation}\label{Dg}
Dg(x) = D(g\overline n_x)({\bf 0}) = e^{-t(g\overline n_x)}\, m(g\overline n_x).
\end{equation}
As a consequence, we obtain 
\begin{equation}\label{Omega}
\Omega(g,x) = e^{-t(g\overline n_x)},\qquad g\in G,\, x\in \mathbb R^n.
\end{equation}

We close this paragraph by the following standard result.
\begin{lemma}  
 Let $x,y\in \mathbb R^n$ and let $g\in G$ be defined at $x$ and $y$. Then
\begin{equation}\label{cov1} 
\Vert g(x) -g(y)\Vert^2 = \Omega(g,x) \, \Vert x-y\Vert^2\,  \Omega(g,y).
\end{equation}
\end{lemma}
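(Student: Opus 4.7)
The plan is to reduce the statement to the $G$-invariance of the Lorentzian bilinear form $[\cdot,\cdot]$ on $\mathbb{R}^{1,n+1}$ by lifting points of $\mathbb{R}^n$ to explicit representatives on the isotropic cone. For $x\in\mathbb{R}^n$, I would set
$$L(x) := \tfrac{1+\|x\|^2}{2}(1,\kappa(x)) = \Bigl(\tfrac{1+\|x\|^2}{2},\, x,\, \tfrac{1-\|x\|^2}{2}\Bigr)\in \Xi,$$
which is the canonical scaling of the generator $(1,\kappa(x))$ of the isotropic ray associated with $x$.

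A direct coordinatewise expansion, using $\|x-y\|^2 = \|x\|^2 + \|y\|^2 - 2\langle x,y\rangle$, yields the key identity
$$[L(x),L(y)] = \tfrac{1}{2}\|x-y\|^2.$$
Since $g\in G$ preserves $[\cdot,\cdot]$ and maps rays to rays, $gL(x)$ lies on the ray $\mathbb{R}^*(1,\kappa(g(x)))$, and its first coordinate equals $(g(1,\kappa(x)))_0\cdot\tfrac{1+\|x\|^2}{2}>0$ by the very construction of the $G$-action on $S^n$. Hence $gL(x)$ is a positive multiple of $L(g(x))$; write $gL(x) = \lambda(g,x)\,L(g(x))$ with $\lambda(g,x)>0$. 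Bilinearity then gives
$$\tfrac{1}{2}\|x-y\|^2 = [L(x),L(y)] = [gL(x),gL(y)] = \lambda(g,x)\lambda(g,y)\cdot\tfrac{1}{2}\|g(x)-g(y)\|^2,$$
so that $\|g(x)-g(y)\|^2 = \lambda(g,x)^{-1}\lambda(g,y)^{-1}\|x-y\|^2$.

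It remains to identify $\lambda(g,x)^{-1}$ with the conformal factor $\Omega(g,x)$. Fixing $x$ and letting $y\to x$ in the last identity, the left-hand side equals $\|Dg(x)(y-x)\|^2 + o(\|y-x\|^2) = \Omega(g,x)^2\|y-x\|^2 + o(\|y-x\|^2)$ by the very definition of the conformal factor (cf.\ \autoref{Dg}--\autoref{Omega}); the right-hand side is $\lambda(g,x)^{-2}\|y-x\|^2 + o(\|y-x\|^2)$ by continuity of $y\mapsto\lambda(g,y)$. Positivity of both quantities then forces $\lambda(g,x) = \Omega(g,x)^{-1}$, proving the claim. The only genuinely calculational step is the lift identity $[L(x),L(y)]=\tfrac{1}{2}\|x-y\|^2$; the rest is forced by $G$-invariance of $[\cdot,\cdot]$, the ray structure of $\Xi$, and the first-order definition of $\Omega$.
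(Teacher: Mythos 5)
Your proof is correct, and all the computational steps check out: the lift $L(x)=\bigl(\tfrac{1+\|x\|^2}{2},\,x,\,\tfrac{1-\|x\|^2}{2}\bigr)$ does satisfy $[L(x),L(y)]=\tfrac12\|x-y\|^2$, the scalar $\lambda(g,x)$ is positive exactly because $(g(1,x'))_0>0$ as the paper observes, and the limit $y\to x$ legitimately identifies $\lambda(g,x)^{-1}$ with $\Omega(g,x)$ since $\Omega(g,x)$ is by construction the conformal factor of $Dg(x)=e^{-t(g\overline n_x)}m(g\overline n_x)$. The paper states this lemma without proof, labelling it a ``standard result,'' so there is nothing to compare against; your argument (lifting to the isotropic cone, invoking $G$-invariance of the Lorentzian form, and pinning down the scaling factor by a first-order expansion rather than by explicitly computing $\lambda(g,x)=\tfrac{1+\|x\|^2}{1+\|g(x)\|^2}\,c(g,\kappa(x))^{-1}$) is the standard and arguably cleanest route, and it is complete as written.
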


\section{The principal series representations of ${\rm SO}_0(1,n+1)$ on the space of differential forms}

Let $\mathcal E^k(S^n)$ be the space of differential $k$-forms on the unit sphere $S^n$ ($0\leq k\leq n$). For $\lambda\in \mathbb C,$   let $\rho^{k}_{\lambda}$ be the representation of $G={\rm SO}_0(1,n+1)$ on $\mathcal E^k(S^n)$
given by
\begin{equation*}
\rho_\lambda^k(g) \omega\,(x') = c(g^{-1},x')^\lambda \left(L_{g^{-1}}^*\omega\right) (x'),\qquad  g\in G,\; \omega\in \mathcal E^k(S^n),
\end{equation*}
where  $L_g$ is the diffeomorphism $x'\longmapsto g(x')$ on $S^n$ and $L_g^*$ is the induced action on differential forms. Here $c(g^{-1},x')$ is conformal factor given by \autoref{ccc}.

Below we will describe   the \emph{noncompact model}  for this series of representations, obtained from the present model through the stereographic projection. 

  Denote by  $\mathcal D\mathcal E^{k}(\mathbb R^n)$   the space of $k$-forms  represented as in \autoref{repkfo}    with the complex-valued coefficients $\omega_{i_1,\ldots, i_k}$ in $\mathcal D(\mathbb R^n).$    

Now, for $g\in G$ and $\omega \in \mathcal E^k(\mathbb R^n)$ let 
\begin{equation}\label{piNC}
{  \pi}^k_\lambda(g) \omega(x) =   \Omega (g^{-1}, x)^\lambda L^*_{g^{-1}} \omega (x),
\end{equation}
where $\Omega (g^{-1}, x)$ is given by \autoref{Omega}. 
This formula defines formally  a representation of $G$. As it stands, the representation is not globally defined. In what follows, it will be enough to observe that for  a relatively compact open subset $U$ of $\mathbb R^n$, there exists a small neighborhood $ V$ of  the neutral element in $G$ (depending on $U$) such that for any $g\in   V$, $g^{-1}$ is defined on $U$. Hence, for any smooth differential $k$-form $\omega$ with $Supp(\omega)\subset U$, the object   $ \pi^{ k}_\lambda(g)\, \omega$ is well defined, it belongs to $\mathcal E^k(\mathbb R^n)$ and has a compact support. This allows us to define the corresponding infinitesimal representation ${\rm d}\pi_{\lambda}^k$ of the Lie algebra $\mathfrak g = \Lie(G)$ by
$$ 
{\rm d}\pi_{\lambda}^k(X) \omega :=  {{\rm d}\over {{\rm d}t}}\pi_{\lambda }^k(\exp(tX))\omega_{\big | t=0},\qquad X\in \mathfrak g.
 $$
The expression is well defined when $\omega\in \mathcal D\mathcal E^k(\mathbb R^n).$  The operator ${\rm d}\pi_{\lambda}^k(X)$ is a first order differential operator with polynomial coefficients and hence can be extended to $\mathcal S\mathcal E^k(\mathbb R^n)$. 
 
The  representations $\pi_{\lambda}^k$   can also be viewed as principal series representations. Indeed, rewrite \autoref{piNC} as
$$ \pi_\lambda^k(g) \omega (x) = \Omega(g^{-1},x)^\lambda \omega(g^{-1}(x))\circ Dg^{-1}(x),$$
where    $\omega(g^{-1}(x))\circ Dg^{-1}(x)$ is  the $k$-form given by 
$$\omega(g^{-1}(x))\circ Dg^{-1}(x)(v_1,   \ldots, v_k)\linebreak =\omega(g^{-1}(x))(Dg^{-1}(x)v_1, \ldots, Dg^{-1}(x)v_k).$$ Now using  \autoref{Dg} and \autoref{Omega} we  get
\begin{equation}\label{spin}  \pi_\lambda^k(g) \omega (x) = e^{-(\lambda+k)t(g^{-1}\overline n_x)}\sigma_k \big(m(g^{-1}\overline n_x)\big)^{-1} \omega(g^{-1}(x) ),\end{equation}
where $\sigma_k$ is the   representation of $M={\rm SO}(n)$ on  $\Lambda^k=\Lambda^k(\mathbb R^n)\otimes\mathbb{C}$. The  presentation \autoref{spin}  is just the noncompact realization of a principal series representation 
(cf \cite{kn}). This yields   the identification
\begin{equation}\label{piPS} 
  \pi_\lambda^k\simeq \Ind_P^G (\sigma_k\otimes \chi_{\lambda+k}\otimes 1),
\end{equation}
 where, for $\lambda\in\mathbb{C}$, we denote by $\chi_\lambda$ the character of $A$ giver by $\chi_\lambda(a_t)=e^{\lambda t}$.

We pin down  that the representation $\sigma_k$ is an irreducible representation of ${\rm SO}(n)$, except for the case where $n$ is even and $k=\frac{n}{2}$   (see \cite{fjs, Ikeda-Taniguchi}),  but for our purpose, this makes no difference.

The {intertwining Knapp-Stein operators} play a crucial role   in semi-simple harmonic analysis. In the present situation they are given as follows (see \cite{fo}),
$$ I^k_\lambda \omega( x) = \int_{\mathbb R^n} \mathcal R_{-2n+2\lambda}^k(x-y)\, \omega(y) dy,\qquad \omega\in \mathcal S\mathcal E^k(\mathbb R^n)$$ 
where $\mathcal R_{-2n+2\lambda}^k$ is the tempered distribution defined by \autoref{riezf2}.  In the notations of the previous section, $ I^k_\lambda$ is nothing but the convolution operator $J_s^k$, defined in \autoref{conv-op}, with $s= {-2n+2\lambda}.$
The operators $I^k_\lambda$, 	defined   first for ${n\over 2}<\Rel \lambda<n$   so that the integral  converges for $\omega \in \mathcal S\mathcal E^k(\mathbb R^n)$, can be analytically continued to the   complex $\lambda$-plane as   a meromorphic family of convolution operators by tempered distributions, thus mapping $\mathcal S\mathcal E^k(\mathbb R^n)$ into  $\mathcal S'\mathcal E^k(\mathbb R^n)$. The following (a priori formal) relation holds for any $g\in G$ : 
\begin{equation}\label{covI}
  I^k_\lambda \circ   \pi^k_\lambda(g) =  \pi_{n-\lambda}^k(g)\circ   I^k_\lambda\ .
\end{equation}
The relation is first proved when  ${n\over 2}<\Rel \lambda<n$,  and shown  (using the covariance property \autoref{cov1} of $\Vert x-y\Vert^2$)  to be valid for forms in $\mathcal D\mathcal E^k(\mathbb R^n)$ and $g$ in an appropriate small neighborhood of the neutral element of $G$.  The   corresponding infinitesimal form of the intertwining relation \autoref{covI} is 
\begin{equation}\label{galta}
  I_\lambda^k \circ {\rm d}\pi_{\lambda}^k(X)  = {\rm d}\pi_{n-\lambda}^k(X)\circ   I_\lambda^k,
\end{equation}
for $X\in \mathfrak g$ and valid on $\mathcal S\mathcal E^k(\mathbb R^n)$. By analytic continuation, it is then extended meromorphically in $\lambda$.

The following property  will be required  later on.
\begin{proposition}\label{injectif} For generic   $\lambda$,  the operator $  I_\lambda^k$ is injective on the space $\mathcal S\mathcal E^k(\mathbb R^n)$.
\end{proposition}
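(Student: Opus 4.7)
The plan is to pass to the Fourier side, where the convolution operator $I^k_\lambda$ becomes multiplication by the explicit matrix-valued symbol computed in \autoref{fo2}, and to show that this symbol is pointwise invertible on $\mathbb R^n\setminus\{0\}$ for generic $\lambda$. Since $I^k_\lambda\omega = \mathcal R^k_{-2n+2\lambda}*\omega$ by \autoref{conv-op}, the Fourier transform converts the convolution into a pointwise product
\[
\mathcal F(I^k_\lambda\omega)(\xi) = M_\lambda(\xi)\,\mathcal F(\omega)(\xi),
\]
where, specializing \autoref{fo2} to $s=-2n+2\lambda$,
\[
M_\lambda(\xi) = \Vert\xi\Vert^{\,n-2\lambda-2}\Big(-(2\lambda-2n+2k)\,\boldsymbol{\iota}_\xi\boldsymbol{\varepsilon}_\xi + (2\lambda-2k)\,\boldsymbol{\varepsilon}_\xi\boldsymbol{\iota}_\xi\Big).
\]
Once $\lambda$ is kept away from the discrete set of poles of $\lambda\mapsto\mathcal R^k_{-2n+2\lambda}$, this multiplier is a smooth, matrix-valued function on $\mathbb R^n\setminus\{0\}$.

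The key linear-algebraic input is that, for every $\xi\neq 0$, the operators $P_\xi := \Vert\xi\Vert^{-2}\boldsymbol{\iota}_\xi\boldsymbol{\varepsilon}_\xi$ and $Q_\xi := \Vert\xi\Vert^{-2}\boldsymbol{\varepsilon}_\xi\boldsymbol{\iota}_\xi$ are complementary idempotents on $\Lambda^k$. Indeed, combining $\boldsymbol{\iota}_\xi\boldsymbol{\varepsilon}_\xi + \boldsymbol{\varepsilon}_\xi\boldsymbol{\iota}_\xi = \Vert\xi\Vert^2\,\Id_\Lambda$ (from \autoref{ID}) with $\boldsymbol{\iota}_\xi^{\,2} = \boldsymbol{\varepsilon}_\xi^{\,2} = 0$ (consequences of \autoref{commi} and \autoref{comm}) yields $P_\xi + Q_\xi = \Id_\Lambda$ together with $P_\xi^2 = P_\xi$ and $Q_\xi^2 = Q_\xi$. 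Consequently $M_\lambda(\xi)$ acts as the scalar $-(2\lambda-2n+2k)\Vert\xi\Vert^{n-2\lambda}$ on $\mathrm{Im}\,P_\xi$ and as $(2\lambda-2k)\Vert\xi\Vert^{n-2\lambda}$ on $\mathrm{Im}\,Q_\xi$, so $M_\lambda(\xi)$ is invertible on $\Lambda^k$ for every $\xi\neq 0$, provided $\lambda\notin\{k,\,n-k\}$.

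With this pointwise invertibility at hand, the conclusion is routine. If $\omega\in\mathcal S\mathcal E^k(\mathbb R^n)$ satisfies $I^k_\lambda\omega = 0$, then $M_\lambda(\xi)\,\mathcal F(\omega)(\xi)=0$ as a distributional identity. Restricting to $\mathbb R^n\setminus\{0\}$ and inverting $M_\lambda(\xi)$ pointwise gives $\mathcal F(\omega)\equiv 0$ off the origin; since $\mathcal F(\omega)$ is a Schwartz function it is continuous, hence vanishes everywhere, and therefore $\omega=0$.

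The only genuine obstacle is the bookkeeping of what ``generic'' must exclude: the poles of the meromorphic family $\lambda\mapsto\mathcal R^k_{-2n+2\lambda}$ coming from the Gamma factors in \autoref{riezf2}, together with the two scalar zero loci $\lambda=k$ and $\lambda=n-k$ of the eigenvalues of $M_\lambda(\xi)$. Outside this discrete subset of $\mathbb C$ the scheme above applies verbatim, and this is precisely what the phrase ``for generic $\lambda$'' is intended to encode.
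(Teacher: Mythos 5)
Your proof is correct and takes essentially the same route as the paper's: pass to the Fourier side, use \autoref{fo2} with $s=-2n+2\lambda$ to identify the matrix symbol, verify its pointwise invertibility on $\mathbb R^n\setminus\{{\bf 0}\}$ under the conditions $\lambda\neq k$, $\lambda\neq n-k$, and conclude that $\mathcal F(\omega)$ vanishes off the origin and hence everywhere by continuity. Your packaging of the algebra via the complementary idempotents $P_\xi$, $Q_\xi$ is just a tidier restatement of the identities the paper records in \autoref{rcal}, so no substantive difference.
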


\begin{proof} Since $I_\lambda^k$ is the convolution product with the   tempered distribution $\mathcal R_{-2n+2\lambda}^k,$ then saying that 
$I_\lambda^k$ is injective is equivalent to prove that (for generic  $\lambda$) the multiplication operator by the Fourier transform $\mathcal F\big(\mathcal R^k_{-2n+2\lambda} \big)$ is injective on  $\mathcal S\mathcal E^k(\mathbb R^n)$. Recall from \autoref{fo2} that, generically in $\lambda$, we have
$$\mathcal F\big(\mathcal R^k_{-2n+2\lambda} \big) (x)= 2  \Vert x\Vert^{n-2\lambda-2} \Big( ( n- k- \lambda) \boldsymbol \iota_x \boldsymbol \varepsilon_x +( \lambda- k)\boldsymbol \varepsilon_x \boldsymbol \iota_x\Big).$$ Recall also from Section 2 that 
\begin{equation}\label{rcal} 
\begin{aligned}
&(\boldsymbol \iota_x \boldsymbol \varepsilon_x)^2   =  \Vert x\Vert^2 \boldsymbol \iota_x \boldsymbol \varepsilon_x,&&&&&&
 ( \boldsymbol \varepsilon_x\boldsymbol \iota_x )^2   =  \Vert x\Vert^2  \boldsymbol \varepsilon_x \boldsymbol \iota_x, \\
 &(\boldsymbol \iota_x \boldsymbol \varepsilon_x)(\boldsymbol \varepsilon_x \boldsymbol \iota_x)  =  0 , &&&&&& (\boldsymbol \iota_x \boldsymbol \varepsilon_x)^2 + (\boldsymbol \varepsilon_x \boldsymbol \iota_x)^2  = \Vert x\Vert^4\Id_\Lambda.
\end{aligned}
\end{equation}
If we assume, in addition, that    $n-k-\lambda\neq 0$ and $\lambda-k\neq 0$, then the identities  \autoref{rcal} imply that 
 for $x\neq {\bf 0}$ the operator
$$(n-k-\lambda) \boldsymbol \iota_x \boldsymbol \varepsilon_x + (\lambda-k) \boldsymbol \varepsilon_x \boldsymbol \iota_x$$ is invertible. Let $\omega\in \mathcal S\mathcal E^k(\mathbb R^n)$ and $\lambda$ as above so that  
$ \mathcal F(\mathcal R^k_{-2n+2\lambda})(x) \omega (x)= 0$. As $ \mathcal F(\mathcal R^k_{-2n+2\lambda})(x)$ is invertible for $x\neq  {\bf 0}$, it follows that $\omega(x) =0$ for $x\neq  {\bf 0}$, and therefore $\omega\equiv 0 $ on $\mathbb R^n.$
\end{proof}

\section{The covariance property of the source operator}

We can now start to give the conformal interpretation of Theorem \ref{main2}. On one hand, we saw in the previous section that the convolution operators $J_{s}^k$ are related to the Knapp-Stein intertwining operators. So it remains to understand the conformal property of the multiplication by $\Vert x-y\Vert^2$.


The group $G={\rm SO}_0(1,n+1)$ acts rationally on the space $\mathbb R^n\times \mathbb R^n$ by the diagonal extension of its action on $\mathbb R^n$, and hence on $\mathcal E^{k,\ell}(\mathbb R^n\times \mathbb R^n)$, giving a realization  of the tensor product representation $  \pi_\lambda^{k} \otimes   \pi_\mu^{\ell}$. More explicitly
$$\pi_\lambda^{k} \otimes   \pi_\mu^{\ell}(g)\, \omega(x,y) = \Omega(g^{-1},x)^\lambda \,\Omega(g^{-1},y)^\mu\, L^*_{g^{-1}}\omega\, (x,y).$$
 
Define the multiplication operator $M: \mathcal E^{k,\ell}(\mathbb R^n\times \mathbb R^n)\longrightarrow \mathcal E^{k,\ell}(\mathbb R^n\times \mathbb R^n)$ by  
$$M\omega(x,y) = \Vert x-y\Vert^2 \,\omega(x,y).$$
The covariance property  \autoref{cov1} of $\Vert x-y\Vert^2$  immediately implies the following result.

\begin{proposition}\label{covM}  
The operator $M$ satisfies 
\[M\circ \big(  \pi_\lambda^{k} \otimes   \pi_\mu^{\ell}\big)(g) = \big(  \pi_{\lambda-1}^{k} \otimes   \pi_{\mu-1}^{\ell}\big)(g)\circ M\ .
\]
\end{proposition}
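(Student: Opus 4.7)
The proof is essentially a direct unwinding of definitions, the main ingredient being the covariance identity \autoref{cov1}. The plan is as follows.

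Fix $\omega \in \mathcal{E}^{k,\ell}(\mathbb{R}^n \times \mathbb{R}^n)$ and $g \in G$ (defined on the relevant points). First I would compute the left-hand side by direct substitution:
\begin{equation*}
M \circ \bigl(\pi_\lambda^k \otimes \pi_\mu^\ell\bigr)(g)\,\omega(x,y) = \|x-y\|^2\,\Omega(g^{-1},x)^\lambda\,\Omega(g^{-1},y)^\mu\,L^*_{g^{-1}}\omega(x,y).
\end{equation*}
Next, I would apply the covariance property \autoref{cov1} with $g$ replaced by $g^{-1}$ to rewrite the scalar factor $\|x-y\|^2$. This yields
\begin{equation*}
\|x-y\|^2 = \Omega(g^{-1},x)^{-1}\,\|g^{-1}(x)-g^{-1}(y)\|^2\,\Omega(g^{-1},y)^{-1},
\end{equation*}
so that
\begin{equation*}
M \circ \bigl(\pi_\lambda^k \otimes \pi_\mu^\ell\bigr)(g)\,\omega(x,y) = \Omega(g^{-1},x)^{\lambda-1}\,\Omega(g^{-1},y)^{\mu-1}\,\|g^{-1}(x)-g^{-1}(y)\|^2\,L^*_{g^{-1}}\omega(x,y).
\end{equation*}

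For the right-hand side, I would use that pullback of differential forms commutes with multiplication by scalar functions, namely $L^*_{g^{-1}}(f\cdot \omega) = (f\circ g^{-1})\cdot L^*_{g^{-1}}\omega$. Applied to $f(x,y)=\|x-y\|^2$ this gives
\begin{equation*}
L^*_{g^{-1}}(M\omega)(x,y) = \|g^{-1}(x)-g^{-1}(y)\|^2\,L^*_{g^{-1}}\omega(x,y),
\end{equation*}
hence
\begin{equation*}
\bigl(\pi_{\lambda-1}^k \otimes \pi_{\mu-1}^\ell\bigr)(g)\circ M\,\omega(x,y) = \Omega(g^{-1},x)^{\lambda-1}\,\Omega(g^{-1},y)^{\mu-1}\,\|g^{-1}(x)-g^{-1}(y)\|^2\,L^*_{g^{-1}}\omega(x,y).
\end{equation*}
Comparing the two expressions gives the claimed intertwining identity. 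Extension from the local domain where $g$ and $g^{-1}$ are defined on the relevant points to the full (formal) statement is then automatic, as discussed in Section 5 for the representations $\pi_\lambda^k$.

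There is no real obstacle here: the only point requiring a bit of care is the commutation of the pullback $L^*_{g^{-1}}$ with multiplication by the scalar function $\|x-y\|^2$, which is immediate since the pullback of a product of a function and a form is the product of their pullbacks. The exponent shift from $(\lambda,\mu)$ to $(\lambda-1,\mu-1)$ comes directly from the quadratic-in-$\Omega$ form of \autoref{cov1}.
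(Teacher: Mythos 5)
Your proof is correct and is precisely the argument the paper has in mind: the paper simply states that Proposition \ref{covM} follows ``immediately'' from the covariance property \autoref{cov1}, and your unwinding of the definitions --- substituting \autoref{cov1} for $g^{-1}$ and using that the pullback $L^*_{g^{-1}}$ of a scalar function times a form is the pullback of the function times the pullback of the form --- is exactly that immediate verification, with the exponent shift $(\lambda,\mu)\mapsto(\lambda-1,\mu-1)$ arising as you describe.
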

Here again the relation is valid when applied to differential  forms in $\mathcal D\mathcal E^{k,\ell}(\mathbb R^n\times \mathbb R^n)$ and $g$ in a small enough neighborhood of the neutral element of $G$. The rigorous infinitesimal version reads as follows : For every $X\in \mathfrak g$, we have
\begin{equation*}
M\circ {\rm d}\!\left(\pi_\lambda^{k} \otimes   \pi_\mu^{\ell}\right)(X)={\rm d}\!\left(\pi_{\lambda-1}^{k} \otimes   \pi_{\mu-1}^{\ell}\right) (X )\circ  M,
\end{equation*}
where, by definition, 
$${\rm d}\!\left(\pi_\lambda^{k} \otimes   \pi_\mu^{\ell}\right)(X ):= {\rm d}\pi_\lambda^{k} (X)\otimes \id+\id\otimes {\rm d}\pi_\lambda^{\ell}(X) .$$

Recall from above that the Knapp-Stein intertwining operator $I_\lambda^k$ is nothing but the convolution operator $J_s^k$ with $s=-2n+2\lambda.$ For convenience let  
$$F_{\lambda, \mu}^{k,\ell}:= -E_{n-2\lambda, n-2\mu}^{k,\ell},$$
where $E_{s,t}^{k,\ell}$ is the differential operator \autoref{estkl2}.  Explicitly 
\begin{eqnarray*}\label{F-lambda-mu-kl2}
  F_{\lambda,\mu}^{k,\ell}&=&    16\Vert x-y\Vert^2        \widetilde \boxvoid_{k,\lambda}\otimes  \widetilde \boxvoid_{\ell,\mu} \nonumber\\ 
  &&- 32 \sum_{j=1}^n (x_j-y_j)   \left\{ (2\lambda-n+2)  \widetilde \nabla_{k,\lambda,j}   \otimes  \widetilde \boxvoid_{\ell,\mu}
  -    (2\mu-n+2)  \widetilde \boxvoid_{k,\lambda} \otimes     \widetilde\nabla_{\ell,\mu,j}  \right\} \nonumber\\ 
  &&- 32 (2\lambda-n+2)(2\mu-n+2)  \sum_{j=1}^n     \widetilde \nabla_{k,\lambda,j}   \otimes      \widetilde \nabla_{\ell,\mu,j}\nonumber \\
  &&-32   (2\mu-n+2)(\mu+1)(\mu-\ell)(\mu-n+\ell)     \widetilde \boxvoid_{k,\lambda} \otimes  \Id_{{\mathcal E}^\ell}\nonumber\\
&&  -32    (2\lambda-n+2)(\lambda+1)(\lambda-k)(\lambda-n+k)      \Id_{{\mathcal E}^k} \otimes  \widetilde \boxvoid_{\ell,\mu},\nonumber
 \end{eqnarray*}
 where
 \begin{eqnarray*}
   \widetilde \boxvoid_{k,\lambda}&=&(\lambda-n+k)(\lambda-k+1) \cdd\dd+(\lambda-n+k+1) (\lambda-k) \dd\cdd \\
  \widetilde  \nabla_{k,\lambda,j} &=&(\lambda-n+k)(\lambda-k+1)\partial_{x_j}-(\lambda-k)\boldsymbol \varepsilon_j\cdd-(\lambda-n+k)\cdd \boldsymbol \iota_j ,
 \end{eqnarray*}
and similarly for $   \widetilde \boxvoid_{\ell,\mu}$ and $  \widetilde \nabla_{\ell,\mu,j}$.
 We will call $F_{\lambda, \mu}^{k,\ell}$ the \emph{source operator}. \\

Theorem \ref{main2} can now be reformulated as follows.
\begin{theorem}\label{41} The differential operator $F_{\lambda, \mu}^{k,\ell}$   acts on $\mathcal E^{k,\ell}(\mathbb R^n\times \mathbb R^n)$  and satisfies
\begin{equation*}
 \kappa_{\lambda,\mu}^{k,\ell}\;M\circ \big(  I^{k}_\lambda \otimes   I^{\ell}_\mu\big) = \big(   I^{k}_{\lambda+1} \otimes   I^{\ell}_{\mu+1}\big)\circ F_{\lambda, \mu}^{k,\ell},
\end{equation*}
where  $\kappa_{\lambda,\mu}^{k,\ell}=16(\lambda-k+1)(\lambda-n+k+1)(\mu-\ell+1)(\mu-n+\ell+1)$.
\end{theorem}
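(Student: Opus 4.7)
The approach is purely an identification of parameters: Theorem \ref{41} is essentially a re-parametrization of Theorem \ref{main2} in terms of the representation parameter $\lambda$ of the principal series (rather than the Riesz parameter $s$), combined with the definition $F_{\lambda,\mu}^{k,\ell}=-E_{n-2\lambda,n-2\mu}^{k,\ell}$ and the identification $I_\lambda^k = J_{-2n+2\lambda}^k$ made in Section 5.

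\medskip

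The plan is as follows. First, recall from Section 5 that the Knapp--Stein operator $I_\lambda^k$ coincides with the convolution operator $J_s^k$ of \autoref{conv-op} with $s = -2n+2\lambda$, equivalently $-s-n = 2\lambda - 2n$. Setting $s = n-2\lambda$ and $t = n-2\mu$ in Theorem \ref{main2}, the shift $s \mapsto s-2$ (respectively $t\mapsto t-2$) on the right-hand side corresponds precisely to $\lambda \mapsto \lambda+1$ (respectively $\mu\mapsto \mu+1$) on the Knapp--Stein side, so the identity of Theorem \ref{main2} becomes
\[
-\kappa_{k,n-2\lambda}\,\kappa_{\ell,n-2\mu}\,\Vert x-y\Vert^2\,\bigl(I_\lambda^k\otimes I_\mu^\ell\bigr) = \bigl(I_{\lambda+1}^k\otimes I_{\mu+1}^\ell\bigr)\circ E_{n-2\lambda,n-2\mu}^{k,\ell}.
\]
Observing that the left-hand side is precisely $-\kappa_{k,n-2\lambda}\,\kappa_{\ell,n-2\mu}\,M\circ(I_\lambda^k\otimes I_\mu^\ell)$ and using the definition $F_{\lambda,\mu}^{k,\ell}=-E_{n-2\lambda,n-2\mu}^{k,\ell}$ turns the identity into
\[
-\kappa_{k,n-2\lambda}\,\kappa_{\ell,n-2\mu}\,M\circ\bigl(I_\lambda^k\otimes I_\mu^\ell\bigr) = -\bigl(I_{\lambda+1}^k\otimes I_{\mu+1}^\ell\bigr)\circ F_{\lambda,\mu}^{k,\ell}.
\]

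\medskip

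It then remains only to check that the scalar on the left matches $\kappa_{\lambda,\mu}^{k,\ell}$. Using $\kappa_{k,s} = (s+n-2k-2)(s-n+2k-2)$ and substituting $s = n-2\lambda$,
\[
\kappa_{k,n-2\lambda} = (2n-2\lambda-2k-2)(-2\lambda+2k-2) = 4(\lambda - k + 1)(\lambda - n + k + 1),
\]
and similarly $\kappa_{\ell,n-2\mu} = 4(\mu-\ell+1)(\mu-n+\ell+1)$. Multiplying these gives exactly $\kappa_{\lambda,\mu}^{k,\ell}=16(\lambda-k+1)(\lambda-n+k+1)(\mu-\ell+1)(\mu-n+\ell+1)$, so the proportionality constants match and the stated identity follows.

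\medskip

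There is no real obstacle here beyond careful bookkeeping of the substitution; the substantive analytic work was already carried out in Theorem \ref{main1} and Theorem \ref{main2}, and the well-definedness (in a meromorphic sense) of both sides on $\mathcal{S}\mathcal{E}^{k,\ell}(\mathbb{R}^n\times \mathbb{R}^n)$ is inherited from the meromorphic continuation of the Riesz distributions $\mathcal{R}^k_s$ used there. One should note that the resulting identity is initially valid in the strip where both $I_\lambda^k$ and $I_\mu^\ell$ are defined by convergent integrals, and is then extended to all $(\lambda,\mu)\in \mathbb{C}^2$ by analytic continuation, consistently with how Theorem \ref{main2} itself was proved.
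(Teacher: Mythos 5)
Your proposal is correct and is exactly the argument the paper intends: Theorem \ref{41} is stated as a reformulation of Theorem \ref{main2} via the substitution $s=n-2\lambda$, $t=n-2\mu$, the identification $I_\lambda^k=J_{-2n+2\lambda}^k$, and the definition $F_{\lambda,\mu}^{k,\ell}=-E_{n-2\lambda,n-2\mu}^{k,\ell}$, and your computation $\kappa_{k,n-2\lambda}\kappa_{\ell,n-2\mu}=16(\lambda-k+1)(\lambda-n+k+1)(\mu-\ell+1)(\mu-n+\ell+1)=\kappa_{\lambda,\mu}^{k,\ell}$ checks out. The remark on meromorphic continuation in $(\lambda,\mu)$ is also consistent with the paper's conventions.
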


Moreover, we have the following covariance property of the source  operator $F_{\lambda, \mu}^{k,\ell}.$
\begin{theorem} For all $\lambda,\mu\in \mathbb C$ and for any $X\in \mathfrak g,$ we have 
\begin{equation*}
F^{k,\ell}_{\lambda, \mu} \circ {\rm d}\!\left(\pi_\lambda^{k} \otimes   \pi_\mu^{\ell}\right)(X ) =  {\rm d}\!\left(\pi_{\lambda+1}^{k} \otimes   \pi_{\mu+1}^{\ell}\right)(X )\circ F^{k,\ell}_{\lambda, \mu}.
\end{equation*}
\end{theorem}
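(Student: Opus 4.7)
The strategy is to deduce the covariance of the source operator $F_{\lambda,\mu}^{k,\ell}$ from the factorization in Theorem \ref{41} together with the known covariance of the multiplication operator $M$ (\autoref{covM}) and of the Knapp--Stein operators $I_\lambda^k$ (\autoref{galta}), and then to cancel the Knapp--Stein factors via \autoref{injectif}.

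Fix $X\in\mathfrak g$, and consider first parameters $\lambda,\mu$ generic enough that $I^k_{\lambda+1}$ and $I^\ell_{\mu+1}$ are injective on the relevant Schwartz spaces (\autoref{injectif}), and moreover that $I^k_{\lambda+1}\otimes I^\ell_{\mu+1}$ is injective on $\mathcal S\mathcal E^{k,\ell}(\mathbb R^n\times\mathbb R^n)$. The latter follows from the former by Fourier transform, since the symbol of the tensor product factors as $\mathcal F(\mathcal R^k_{-2n+2\lambda+2})(x)\otimes\mathcal F(\mathcal R^\ell_{-2n+2\mu+2})(y)$, and each factor is pointwise invertible away from the origin of its respective variable by the formulas \autoref{fo2} and \autoref{rcal}.

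The plan is then to compose $I^k_{\lambda+1}\otimes I^\ell_{\mu+1}$ on the left of $F^{k,\ell}_{\lambda,\mu}\circ {\rm d}(\pi_\lambda^k\otimes\pi_\mu^\ell)(X)$ and chase the identity through the following five steps:
\begin{align*}
(I^k_{\lambda+1}\otimes I^\ell_{\mu+1})\circ F^{k,\ell}_{\lambda,\mu}\circ {\rm d}(\pi_\lambda^k\otimes\pi_\mu^\ell)(X)
&\;\overset{\text{Thm \ref{41}}}{=}\; \kappa^{k,\ell}_{\lambda,\mu}\,M\circ(I_\lambda^k\otimes I_\mu^\ell)\circ {\rm d}(\pi_\lambda^k\otimes\pi_\mu^\ell)(X)\\
&\;\overset{\autoref{galta}}{=}\; \kappa^{k,\ell}_{\lambda,\mu}\,M\circ {\rm d}(\pi_{n-\lambda}^k\otimes\pi_{n-\mu}^\ell)(X)\circ(I_\lambda^k\otimes I_\mu^\ell)\\
&\;\overset{\autoref{covM}}{=}\; \kappa^{k,\ell}_{\lambda,\mu}\,{\rm d}(\pi_{n-\lambda-1}^k\otimes\pi_{n-\mu-1}^\ell)(X)\circ M\circ(I_\lambda^k\otimes I_\mu^\ell)\\
&\;\overset{\text{Thm \ref{41}}}{=}\; {\rm d}(\pi_{n-\lambda-1}^k\otimes\pi_{n-\mu-1}^\ell)(X)\circ(I^k_{\lambda+1}\otimes I^\ell_{\mu+1})\circ F^{k,\ell}_{\lambda,\mu}\\
&\;\overset{\autoref{galta}}{=}\; (I^k_{\lambda+1}\otimes I^\ell_{\mu+1})\circ {\rm d}(\pi_{\lambda+1}^k\otimes\pi_{\mu+1}^\ell)(X)\circ F^{k,\ell}_{\lambda,\mu}.
\end{align*}
Here the infinitesimal version of the covariance of $M$ from \autoref{covM} shifts $(\lambda,\mu)\mapsto(\lambda-1,\mu-1)$, and \autoref{galta} is used in the form $I^k_\nu\circ {\rm d}\pi^k_\nu(X)={\rm d}\pi^k_{n-\nu}(X)\circ I^k_\nu$ at $\nu=\lambda$ and $\nu=\lambda+1$ (and similarly in $\mu$). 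The scalar $\kappa^{k,\ell}_{\lambda,\mu}$ is the same throughout, so it cancels cleanly when Theorem \ref{41} is invoked at the end. Injectivity of $I^k_{\lambda+1}\otimes I^\ell_{\mu+1}$ then gives the desired identity for generic $(\lambda,\mu)$.

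To conclude for all $\lambda,\mu\in\mathbb C$, I would invoke analytic continuation: both $F^{k,\ell}_{\lambda,\mu}$ and ${\rm d}(\pi_\lambda^k\otimes\pi_\mu^\ell)(X)$ are differential operators whose coefficients depend polynomially on $\lambda$ and $\mu$ (the explicit formula for $F^{k,\ell}_{\lambda,\mu}$ displayed after Theorem \ref{41} makes this manifest, and the infinitesimal action $d\pi_\lambda^k(X)$ is affine in $\lambda$ by \autoref{spin}). Hence both sides of the claimed identity are polynomial in $(\lambda,\mu)$, and agreement on a Zariski-dense set extends to agreement everywhere.

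The principal obstacle is really the bookkeeping around injectivity and the need to work in the hybrid setting where $I^k_\lambda$ takes $\mathcal S\mathcal E^k$ to $\mathcal S'\mathcal E^k$; this is handled by observing that $M$, $F^{k,\ell}_{\lambda,\mu}$, and ${\rm d}(\pi_\lambda^k\otimes\pi_\mu^\ell)(X)$ are all differential operators with polynomial coefficients and therefore extend naturally to tempered distribution-valued forms, so every composition above is meaningful. No new identity is required beyond those already established in the paper.
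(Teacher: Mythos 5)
Your proof is correct and follows essentially the same route as the paper: the identical five-step chain (Theorem \ref{41}, then \autoref{galta}, then \autoref{covM}, then Theorem \ref{41} again, then \autoref{galta} at the shifted parameters), concluded by the injectivity of the Knapp--Stein factors from \autoref{injectif}. The extra remarks you supply on why the tensor product of injective Knapp--Stein operators is injective and on the meromorphic/polynomial continuation in $(\lambda,\mu)$ are details the paper leaves implicit, and they are handled appropriately.
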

 \begin{proof} In the light of Theorem \ref{41}, Proposition \ref{covM} and the identity \autoref{galta}, we have 
\begin{align*}
& \big(   I^{k}_{\lambda+1} \otimes   I^{\ell}_{\mu+1}\big)\circ F_{\lambda, \mu}^{k,\ell} \circ {\rm d}\!\left(\pi_\lambda^{k} \otimes   \pi_\mu^{\ell}\right)(X )\\
&\qquad\qquad\qquad  =  \kappa_{\lambda,\mu}^{k,\ell}  M\circ \big(   I^{k}_{\lambda } \otimes   I^{\ell}_{\mu }\big)  \circ {\rm d}\!\left(\pi_\lambda^{k} \otimes   \pi_\mu^{\ell}\right)(X )\\
&\qquad\qquad\qquad=  \kappa_{\lambda,\mu}^{k,\ell}  M\circ  {\rm d}\!\left(\pi_{n-\lambda}^{k} \otimes   \pi_{n-\mu}^{\ell}\right)(X ) \circ \big(  I^{k}_\lambda \otimes   I^{\ell}_\mu\big)\\
&\qquad\qquad\qquad = \kappa_{\lambda,\mu}^{k,\ell}  {\rm d}\!\left(\pi_{n-\lambda-1}^{k} \otimes   \pi_{n-\mu-1}^{\ell}\right)(X )\circ M\circ \big(   I^{k}_\lambda \otimes   I^{\ell}_\mu\big)\\
&\qquad\qquad\qquad  ={\rm d}\!\left(\pi_{n-\lambda-1}^{k} \otimes   \pi_{n-\mu-1}^{\ell}\right)(X ) \circ \big(   I^{k}_{\lambda+1} \otimes   I^{\ell}_{\mu+1}\big)\circ F_{\lambda, \mu}^{k,\ell}\\
&\qquad\qquad\qquad  =\big(   I^{k}_{\lambda+1} \otimes  I^{\ell}_{\mu+1}\big)\circ {\rm d}\!\left(\pi_{ \lambda+1}^{k} \otimes   \pi_{ \mu+1}^{\ell}\right)(X )\circ F_{\lambda, \mu}^{k,\ell}.
\end{align*}
Now, use  Proposition \ref{injectif} to finish the proof.
\end{proof}

As $G$ is connected, the infinitesimal covariance property of the operator $F_{\lambda, \mu}^{k,\ell}$ implies first its covariance under the group $G$, that is
\begin{equation*}
F^{k,\ell}_{\lambda, \mu} \circ \big(   \pi^{k}_\lambda \otimes   \pi^{\ell}_\mu\big)(g)  \omega = \big(  \pi_{\lambda+1}^{k} \otimes   \pi_{\mu+1}^{\ell}\big)(g)\circ F_{\lambda, \mu}^{k,\ell}\omega,
\end{equation*}
for $\omega\in \mathcal D \mathcal E^{k,\ell}(\mathbb R^n\times \mathbb R^n)$ and $g\in G$ is such that $g^{-1}$ is defined on a neighborhood of the support of $\omega$. 

\begin{remark} Let us mention that improving on our results, it is possible to construct  a differential operator ${\widetilde F}_{\lambda, \mu}^{k,\ell}$  on $S^n\times S^n$, which admits $F^{k,\ell}_{\lambda, \mu}$ as its  local expression on 
$S^n\smallsetminus{\{(0,0,\dots,0,-1)\}}\times S^n\smallsetminus{\{(0,0,\dots,0,-1)\}}\simeq \mathbb R^n \times \mathbb R^n$
and which is covariant for $G$ with respect to $(\rho_\lambda^k\otimes \rho_\mu^\ell, \rho_{\lambda+1}^k\otimes \rho_{\mu+1}^\ell)$. We skip the proof as  this corresponds to general standard results. See for instance  Section 8.2 in \cite{bck} or Fact 3.3 in \cite{kkp}.

\end{remark}
It is possible to compose the  source operators,  to yield more covariant differential operators. Indeed,  for arbitrary integer $m\geq 1$, we set   
$$ F^{k,\ell}_{\lambda, \mu;m} := F^{k,\ell}_{\lambda+m-1, \mu+m-1}\circ \dots \circ F^{k,\ell}_{\lambda+1, \mu+1}\circ F^{k,\ell}_{\lambda, \mu}\,.$$
Then $F^{k,\ell}_{\lambda, \mu;m}$ intertwines the representations $  \pi^{k}_\lambda\otimes   \pi^{\ell}_\mu$ and $  \pi^{k}_{\lambda+m}\otimes   \pi^{\ell}_{\mu+m}$.\\

 The following statement gives another approach to these operators.
\begin{proposition} For any integer $m\geq 1$, we have
\begin{equation*}
 \kappa_{\lambda,\mu; m}^{k,\ell} M^{m}\circ \big(   I^{k}_\lambda \otimes   I^{\ell}_\mu\big) = \big(   I^{k}_{\lambda+m} \otimes  I^{\ell}_{\mu+m}\big)\circ F^{k,\ell}_{\lambda, \mu;m}\,,
\end{equation*}
where $ \kappa_{\lambda,\mu; m}^{k,\ell}=\kappa_{\lambda+m-1,\mu+m-1}^{k,\ell}\cdots \kappa_{\lambda+1,\mu+1}^{k,\ell} \kappa_{\lambda,\mu}^{k,\ell}$ and $M^m=M\circ\cdots\circ M$, $m$-times.
\end{proposition}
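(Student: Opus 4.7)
The plan is to prove the identity by induction on $m$, using Theorem \ref{41} both as the base case ($m=1$) and as the driving step of the induction. Since $F^{k,\ell}_{\lambda,\mu;m}$ and $\kappa_{\lambda,\mu;m}^{k,\ell}$ are defined as ordered products, the induction is essentially just a matter of peeling off the outermost factor at each step and invoking Theorem \ref{41} with shifted parameters.

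More precisely, assume the identity holds for $m$; I would then write
\begin{equation*}
M^{m+1}\circ (I^k_\lambda\otimes I^\ell_\mu)=M\circ\Bigl(M^{m}\circ (I^k_\lambda\otimes I^\ell_\mu)\Bigr)
\end{equation*}
and substitute the inductive hypothesis into the parenthesis, obtaining
\begin{equation*}
\kappa_{\lambda,\mu;m}^{k,\ell}\,M^{m+1}\circ(I^k_\lambda\otimes I^\ell_\mu)=M\circ(I^k_{\lambda+m}\otimes I^\ell_{\mu+m})\circ F^{k,\ell}_{\lambda,\mu;m}.
\end{equation*}
Next, apply Theorem \ref{41} with $(\lambda,\mu)$ replaced by $(\lambda+m,\mu+m)$ to the factor $M\circ(I^k_{\lambda+m}\otimes I^\ell_{\mu+m})$. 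This transforms the right-hand side into
\begin{equation*}
(\kappa_{\lambda+m,\mu+m}^{k,\ell})^{-1}\,(I^k_{\lambda+m+1}\otimes I^\ell_{\mu+m+1})\circ F^{k,\ell}_{\lambda+m,\mu+m}\circ F^{k,\ell}_{\lambda,\mu;m},
\end{equation*}
and the composition $F^{k,\ell}_{\lambda+m,\mu+m}\circ F^{k,\ell}_{\lambda,\mu;m}$ is by definition precisely $F^{k,\ell}_{\lambda,\mu;m+1}$. Multiplying through by $\kappa_{\lambda+m,\mu+m}^{k,\ell}$ and using the telescoping relation
\begin{equation*}
\kappa_{\lambda,\mu;m+1}^{k,\ell}=\kappa_{\lambda+m,\mu+m}^{k,\ell}\,\kappa_{\lambda,\mu;m}^{k,\ell}
\end{equation*}
gives the identity for $m+1$.

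I do not expect a genuine obstacle here; the argument is entirely formal once Theorem \ref{41} is available. The only mild subtleties are bookkeeping, namely verifying that the product of $\kappa$-factors combines correctly (which follows directly from the recursive definition of $\kappa_{\lambda,\mu;m}^{k,\ell}$), and checking that the parameters $\lambda+j$, $\mu+j$ ($0\le j\le m$) all avoid the poles of the Knapp--Stein family so that the intermediate identities make sense. The latter is addressed by working for generic $\lambda,\mu$ and extending meromorphically, in the same spirit as the analytic continuation arguments used throughout Sections 3 and 5 of the paper.
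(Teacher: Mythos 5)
Your proposal is correct and follows exactly the same route as the paper: induction on $m$, peeling off one factor of $M$, substituting the inductive hypothesis, and applying Theorem \ref{41} at the shifted parameters $(\lambda+m,\mu+m)$ so that the $\kappa$-factors and the ordered product defining $F^{k,\ell}_{\lambda,\mu;m}$ telescope. The only cosmetic difference is that you pass from $m$ to $m+1$ while the paper writes the step from $m-1$ to $m$.
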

\begin{proof} For $m=1$, this is Theorem \ref{41}. Assume $m\geq 2.$ By induction on $m$ we have 
\begin{align*}
   \kappa_{\lambda,\mu; m}^{k,\ell} M^m \circ \big(   I^{k}_\lambda \otimes   I^{\ell}_\mu\big) &=   \kappa_{\lambda,\mu; m}^{k,\ell} M\circ  M^{m-1} \circ \big(   I^{k}_\lambda \otimes   I^{\ell}_\mu\big) \\
&=  \kappa_{\lambda+m-1,\mu+m-1}^{k,\ell} M\circ   (  I^{k}_{\lambda+m-1} \otimes   I^{\ell}_{\mu+m-1})\circ  F^{k,\ell}_{\lambda,\mu;m-1} \\
&=  \big(   I^{k}_{\lambda+m} \otimes   I^{\ell}_{\mu+m}\big)\circ F^{k,\ell}_{\lambda+m-1, \mu+m-1} \circ F^{k,\ell}_{\lambda,\mu;m-1} \\
&= \big(   I^{k}_{\lambda+m} \otimes   I^{\ell}_{\mu+m}\big)\circ  F^{k,\ell}_{\lambda, \mu;m}\,.
\end{align*}
\end{proof}

\section{Conformally covariant bi-differential operators on differential forms}

From the source operators $F_{\lambda, \mu}^{k,\ell}$, one can  construct covariant bi-differential operators under the action of $G={\rm SO}_0(1,n+1).$ First, introduce the \emph{restriction map}
$$\res : \mathcal E^{k,\ell} ( \mathbb R^n\times \mathbb R^n)\longrightarrow  C^\infty ( \mathbb R^n, \Lambda^k \otimes \Lambda^\ell ) $$
defined by
$$(\res \omega) (x) =  \omega(x,x),$$
where $C^\infty ( \mathbb R^n, \Lambda^k \otimes \Lambda^\ell)$ denotes the space of complex-valued smooth functions on $\mathbb R^n$ with values in $\Lambda^k \otimes \Lambda^\ell.$
Let $G$ acts on  $\mathcal E^{k,\ell} ( \mathbb R^n\times \mathbb R^n)$ by $ \pi_\lambda^{k}\otimes  \pi_\mu^{\ell}$. 
Using the realization of $ \pi_\lambda^{k}$ and $ \pi_\mu^{\ell}$ as principal series  representations (see \autoref{piPS}), the following result is immediate.
\begin{proposition} For any $(\lambda, \mu)$ the map $\res$ intertwines the representations $\pi_\lambda^{k}\otimes  \pi_\mu^{\ell}$ and $\Ind_P^G \big((\sigma_k\otimes \sigma_\ell)\otimes \chi_{\lambda+\mu+k+\ell}\otimes 1\big)$.
\end{proposition}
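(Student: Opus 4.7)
The plan is to verify the intertwining relation by direct computation in the noncompact model, using the explicit formula \autoref{spin} for $\pi^{k}_{\lambda}$ and the corresponding description of $\Ind_P^G\bigl((\sigma_k\otimes\sigma_\ell)\otimes\chi_{\lambda+\mu+k+\ell}\otimes 1\bigr)$. The key observation is that the restriction to the diagonal combines, in the most transparent way, the two tensor factors living at $x$ and $y$ into a single factor at $x=y$.

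First, I would write the action of the tensor product on $\mathcal E^{k,\ell}(\mathbb R^n\times\mathbb R^n)$:
\[
\bigl(\pi^{k}_{\lambda}\otimes\pi^{\ell}_{\mu}\bigr)(g)\,\omega(x,y)=\Omega(g^{-1},x)^{\lambda}\,\Omega(g^{-1},y)^{\mu}\,L^{*}_{g^{-1}}\omega(x,y),
\]
and then apply $\res$, which amounts to setting $y=x$. The conformal factors multiply, giving $\Omega(g^{-1},x)^{\lambda+\mu}$. Using \autoref{Dg} and \autoref{Omega}, together with the fact that the action of $L^{*}_{g^{-1}}$ on a $(k,\ell)$-form is obtained via $Dg^{-1}(x)\otimes Dg^{-1}(y)$, the diagonal specialization turns this into $\sigma_k\bigl(m(g^{-1}\overline n_x)\bigr)^{-1}\otimes\sigma_\ell\bigl(m(g^{-1}\overline n_x)\bigr)^{-1}=(\sigma_k\otimes\sigma_\ell)\bigl(m(g^{-1}\overline n_x)\bigr)^{-1}$.

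Collecting terms exactly as was done for a single $\pi^{k}_{\lambda}$ in the derivation of \autoref{spin}, I would obtain
\[
\res\bigl(\bigl(\pi^{k}_{\lambda}\otimes\pi^{\ell}_{\mu}\bigr)(g)\omega\bigr)(x)=e^{-(\lambda+\mu+k+\ell)\,t(g^{-1}\overline n_x)}\,(\sigma_k\otimes\sigma_\ell)\bigl(m(g^{-1}\overline n_x)\bigr)^{-1}(\res\omega)(g^{-1}(x)),
\]
which is precisely the noncompact realization of $\Ind_P^G\bigl((\sigma_k\otimes\sigma_\ell)\otimes\chi_{\lambda+\mu+k+\ell}\otimes 1\bigr)$ applied to $\res\omega$.

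There is no real obstacle here: the claim is a formal consequence of the fact that, in the noncompact picture, both $\pi^{k}_{\lambda}$ and $\pi^{\ell}_{\mu}$ are built out of the same cocycle data $(t(g^{-1}\overline n_x),m(g^{-1}\overline n_x))$ evaluated at the base point, so restriction to the diagonal is compatible with the Langlands decomposition of $P=MAN$. The only care needed is the domain of definition: the formula is first established on the neighborhood of the neutral element where $g^{-1}$ is defined on the supports, and then extended by the standard arguments used throughout the paper, which is why the statement is phrased uniformly in $(\lambda,\mu)\in\mathbb C^{2}$.
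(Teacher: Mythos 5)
Your proof is correct and follows exactly the route the paper intends: the paper declares the proposition ``immediate'' from the principal series realization \autoref{spin}, and your computation simply spells out that on the diagonal the two cocycle factors $e^{-(\lambda+k)t(g^{-1}\overline n_x)}$ and $e^{-(\mu+\ell)t(g^{-1}\overline n_x)}$ multiply to give $\chi_{\lambda+\mu+k+\ell}$ while the $M$-parts combine into $\sigma_k\otimes\sigma_\ell$. Nothing is missing; the remark about domains of definition matches the conventions used throughout the paper.
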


As a representation of $M ={\rm SO}(n)$, the representation $\sigma_k\otimes \sigma_\ell$ is in general not irreducible. Let $\Gamma $ be a minimal invariant subspace of $\Lambda^k \otimes \Lambda^\ell $ under the action of ${\rm SO}(n)$. Let $\sigma_\Gamma$ be the corresponding irreducible representation of ${\rm SO}(n)$ on $\Gamma $ and let
$p_\Gamma$ be the orthogonal projection on $\Gamma $. Define the map $\res_\Gamma$ by
$$\res_\Gamma = p_\Gamma \circ\res.$$
We can refine the previous proposition as follows.
\begin{proposition}
For any $(\lambda, \mu)$ the map $\res_\Gamma$ intertwines the representations $\pi_\lambda^{k}\otimes  \pi_\mu^{\ell}$ and $\Ind_P^G \big(\sigma_\Gamma\otimes \chi_{\lambda+\mu+k+\ell}\otimes 1\big)$.
\end{proposition}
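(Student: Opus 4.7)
The plan is to deduce this refinement from the preceding proposition by composing the intertwiner $\res$ with the pointwise projection $p_\Gamma$. Since we already know that $\res$ intertwines $\pi_\lambda^k\otimes\pi_\mu^\ell$ and $\Ind_P^G\big((\sigma_k\otimes\sigma_\ell)\otimes\chi_{\lambda+\mu+k+\ell}\otimes 1\big)$, the task reduces to showing that the pointwise application of $p_\Gamma$ defines a $G$-equivariant map
\[
\widetilde{p}_\Gamma : \Ind_P^G\big((\sigma_k\otimes\sigma_\ell)\otimes\chi_{\lambda+\mu+k+\ell}\otimes 1\big)\longrightarrow \Ind_P^G\big(\sigma_\Gamma\otimes\chi_{\lambda+\mu+k+\ell}\otimes 1\big),
\]
after which $\res_\Gamma=\widetilde{p}_\Gamma\circ\res$ is automatically an intertwiner.

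The crucial input is that $p_\Gamma$ is $M$-equivariant. This is immediate: the natural Hermitian inner product on $\Lambda^k\otimes\Lambda^\ell$ (induced from the standard Euclidean product on $\mathbb R^n$) is invariant under $M\simeq {\rm SO}(n)$, so since $\Gamma$ is $M$-stable by assumption, so is its orthogonal complement, and the orthogonal projection $p_\Gamma$ commutes with $\sigma_k\otimes\sigma_\ell$. Hence $p_\Gamma$ is a morphism of $M$-modules from $\sigma_k\otimes\sigma_\ell$ onto $\sigma_\Gamma$.

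By functoriality of the induction functor $\Ind_P^G$ with respect to morphisms of $P$-modules (here extending $\sigma_k\otimes\sigma_\ell$ and $\sigma_\Gamma$ trivially across $A$ via $\chi_{\lambda+\mu+k+\ell}$ and across $N$), the pointwise map $\widetilde{p}_\Gamma$ is a well-defined $G$-equivariant map between the induced representations. Composing with the previous proposition gives the desired intertwining property of $\res_\Gamma$.

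The only point requiring any care —— and arguably the sole obstacle —— is bookkeeping: one must check that in the noncompact realization, the $M$-action on the fibre of the induced bundle is precisely the pointwise action on $\Lambda^k\otimes\Lambda^\ell$ that appears in formula \autoref{spin}, so that the pointwise application of an $M$-equivariant endomorphism indeed intertwines the full $G$-actions on both induced pictures. This verification is routine and is handled by the standard dictionary between the compact and noncompact models already invoked in \autoref{piPS}.
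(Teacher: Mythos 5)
Your argument is correct and is precisely the standard one the paper has in mind: the paper states this proposition without proof, treating it as an immediate consequence of the previous proposition together with the $M$-equivariance of the orthogonal projection $p_\Gamma$ (which follows from the ${\rm SO}(n)$-invariance of the inner product on $\Lambda^k\otimes\Lambda^\ell$) and the functoriality of induction. Your write-up simply makes explicit what the authors leave implicit, so there is nothing to criticize.
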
 
Now define the  bi-differential operators $$ B_{\lambda, \mu;m}^{k,\ell;\Gamma}: \mathcal E^{k,\ell} ( \mathbb R^n\times \mathbb R^n)\longrightarrow   
C^\infty(\mathbb R^n, \Gamma )$$  by
$$
B_{\lambda, \mu;m}^{k,\ell;\Gamma} := \res_\Gamma \circ\, F_{\lambda, \mu;m}^{k,\ell}\,,
$$
where $C^\infty(\mathbb R^n, \Gamma )$ denotes the space of   smooth functions on $\mathbb R^n$ with values in $\Gamma \subset \Lambda^k \otimes \Lambda^\ell.$

\begin{theorem} The operator $B_{\lambda, \mu;m}^{k,\ell;\Gamma}$ is a   bi-differential operator covariant with respect to  $\pi_\lambda^{k}\otimes  \pi_\mu^{\ell}$ and $\Ind_P^G (\sigma_\Gamma\otimes \chi_{\lambda+\mu+k+\ell+2m}\otimes 1)$.
\end{theorem}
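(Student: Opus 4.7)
The strategy is to write $B_{\lambda,\mu;m}^{k,\ell;\Gamma} = \res_\Gamma \circ F_{\lambda,\mu;m}^{k,\ell}$ and chain together the covariance of each factor, both of which are already available from the preceding theorem and the preceding proposition.

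First I would establish, by induction on $m \geq 1$, that $F_{\lambda,\mu;m}^{k,\ell}$ intertwines ${\rm d}(\pi_\lambda^k \otimes \pi_\mu^\ell)$ with ${\rm d}(\pi_{\lambda+m}^k \otimes \pi_{\mu+m}^\ell)$. The base case $m=1$ is precisely the covariance theorem proved just above. For the inductive step, the hypothesis gives the intertwining property for $F_{\lambda,\mu;m-1}^{k,\ell}$ with target representation $\pi_{\lambda+m-1}^k \otimes \pi_{\mu+m-1}^\ell$, and post-composing with $F_{\lambda+m-1,\mu+m-1}^{k,\ell}$ — which by that same covariance theorem intertwines $\pi_{\lambda+m-1}^k \otimes \pi_{\mu+m-1}^\ell$ with $\pi_{\lambda+m}^k \otimes \pi_{\mu+m}^\ell$ — yields the claim for $F_{\lambda,\mu;m}^{k,\ell}$.

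Next, the preceding proposition, applied with the shifted parameters $\lambda+m$ and $\mu+m$, states that $\res_\Gamma$ intertwines $\pi_{\lambda+m}^k \otimes \pi_{\mu+m}^\ell$ with $\Ind_P^G(\sigma_\Gamma \otimes \chi_{(\lambda+m)+(\mu+m)+k+\ell} \otimes 1) = \Ind_P^G(\sigma_\Gamma \otimes \chi_{\lambda+\mu+k+\ell+2m} \otimes 1)$. Composing this intertwiner with the one built in the previous paragraph gives the infinitesimal covariance of $B_{\lambda,\mu;m}^{k,\ell;\Gamma}$ on $\mathcal D\mathcal E^{k,\ell}(\mathbb R^n \times \mathbb R^n)$. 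Since $G$ is connected (exactly the argument already invoked for the source operator in the previous section), this integrates to a group-level covariance statement for $g$ in a suitable neighborhood of the identity of $G$ depending on the support of the test form.

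Finally, one must check that $B_{\lambda,\mu;m}^{k,\ell;\Gamma}$ really is a bi-differential operator. This is read off from \autoref{estkl2}: each factor $F_{\lambda+j,\mu+j}^{k,\ell}$ is a differential operator on $\mathbb R^n \times \mathbb R^n$ whose coefficients are polynomial in $x_j-y_j$ and $\|x-y\|^2$ and are built from ${\boldsymbol d}, {\boldsymbol \delta}, {\boldsymbol \varepsilon}_j, {\boldsymbol \iota}_j$ acting separately in the $x$- and $y$-factors; compositions preserve this structure, and restriction to the diagonal followed by the projection $p_\Gamma$ therefore sends an elementary tensor $\omega_1(x) \otimes \omega_2(y)$ to a finite sum of products of derivatives of $\omega_1$ and $\omega_2$ evaluated at the common point — which is the defining property of a bi-differential operator into $C^\infty(\mathbb R^n, \Gamma)$. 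I do not expect any deeper obstacle; the only delicate point is the bookkeeping of the character shifts $\chi_{\lambda+\mu+k+\ell} \leadsto \chi_{\lambda+\mu+k+\ell+2m}$ and the (standard) passage from infinitesimal to group covariance via the connectedness of $G$.
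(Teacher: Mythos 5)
Your proposal is correct and follows exactly the route the paper intends (the paper leaves the proof implicit): the iterated source operator $F_{\lambda,\mu;m}^{k,\ell}$ intertwines $\pi_\lambda^k\otimes\pi_\mu^\ell$ with $\pi_{\lambda+m}^k\otimes\pi_{\mu+m}^\ell$ by induction from the covariance theorem for a single source operator, and composing with the intertwining property of $\res_\Gamma$ at the shifted parameters $(\lambda+m,\mu+m)$ produces precisely the character $\chi_{\lambda+\mu+k+\ell+2m}$. Your closing remarks on the bi-differential nature of the composite and on integrating the infinitesimal covariance using the connectedness of $G$ match the paper's earlier discussion as well.
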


In some cases, it is possible to give an explicit  expression for these covariant bi-differential operators. For instance,  assume that $0\leq k+\ell\leq n$, then the representation $\Lambda^{k+\ell}$ appears in the decomposition of the tensor product $\Lambda^k \otimes \Lambda^\ell $ with multiplicity one and the projection (up to a normalization factor) is given by
$$p_{\Lambda^{k+\ell}}(\omega\otimes \eta) = \omega\wedge \eta.$$
For $m=1$, the bi-differential operator $B_{\lambda, \mu;1}^{k,\ell;\Gamma}$ is given by
 \begin{eqnarray}\label{dernier}
  B_{\lambda, \mu;1}^{k,\ell;\Gamma}(\omega\otimes \eta) (x) &=&     
  - 32 \Bigl\{ (2\mu-n+2)(\mu+1)(\mu-\ell)(\mu-n+\ell)     \widetilde \boxvoid_{k,\lambda}\omega(x)\wedge\eta(x)    \nonumber\\
  &&+(2\lambda-n+2)(2\mu-n+2)  \sum_{j=1}^n     \widetilde \nabla_{k,\lambda,j}\omega(x)\wedge       \widetilde \nabla_{\ell,\mu,j}\eta(x)  \nonumber\\ 
&&  +  (2\lambda-n+2)(\lambda+1)(\lambda-k)(\lambda-n+k)     \omega(x) \wedge \widetilde \boxvoid_{\ell,\mu}\eta(x) \Big\}
 \end{eqnarray}
 If in addition $k=\ell=0,$ i.e. $\omega,\eta \in C^\infty(\mathbb R^n),$ then 
\begin{multline*}
\shoveright{B_{\lambda, \mu;1}^{0,0;\mathbb C} (\omega\otimes \eta) (x) = 
-64(\lambda+1)(\lambda-n)(\mu+1)(\mu-n)  \Big\{\mu (\mu-\frac{n}{2}+1)     \Big(Q\big({\partial \over {\partial x}}\big) \omega\Big)(x) \eta(x)}\\
\shoveright{+2(\lambda-\frac{n}{2}+1) (\mu-\frac{n}{2}+1) \sum_{j=1}^n \Big({{\partial \omega}\over {\partial x_j}}\Big)(x) \Big({{\partial \eta}\over {\partial x_j}}\Big)(x) }\\
+  \lambda (\lambda-\frac{n}{2}+1)    \omega(x)  \Big(Q\big({\partial \over {\partial x}}\big) \eta\Big)(x)    \Big\},
\end{multline*}
   where $Q$ is the quadratic form \autoref{qua}. Hence we recover the  multidimensional Rankin-Cohen operators   in \cite[Section 10]{bck}.

\end{document}